\newcommand{\N}{\mathbb{N}}
\definecolor{myblue}{RGB}{80,80,160}
\definecolor{mygreen}{RGB}{80,160,80}
\newtheorem{theorem}{Theorem}
\newtheorem{lemma}[theorem]{Lemma}
\newtheorem{proposition}[theorem]{Proposition}
\newtheorem{claim}[theorem]{Claim}
\newtheorem{definition}[theorem]{Definition}
\newtheorem{example}[theorem]{Example}
\newtheorem{remark}[theorem]{Remark}
\numberwithin{theorem}{section}
\numberwithin{lemma}{section}
\numberwithin{proposition}{section}
\numberwithin{remark}{section}
\numberwithin{example}{section}
\numberwithin{corollary}{section}
\numberwithin{figure}{section}
\numberwithin{definition}{section}
\title{Switch-based Markov Chains for Sampling Hamiltonian Cycles in Dense Graphs}
\author{Pieter Kleer\footnote{P. Kleer was supported by the Netherlands Organisation for Scientific Research
(NWO) through the Gravitation Programme Networks (024.002.003).}\\
\small Max Planck Institute for Informatics\\[-0.8ex]
\small Saarland Informatics Campus (SIC)\\[-0.8ex] 
\small Saarbrücken, Germany \\
\small\tt pkleer@mpi-inf.mpg.de\\
\and
Viresh Patel \qquad  Fabian Stroh \footnote{V. Patel and F. Stroh were supported by the Netherlands Organisation for Scientific Research
(NWO) through the Gravitation Programme Networks (024.002.003) and the NWO TOP
grant (613.001.601). }\\
\small University of Amsterdam\\[-0.8ex]
\small Korteweg-de Vries Institute (KdVI)\\[-0.8ex]
\small Amsterdam, The Netherlands\\
\small\tt \{vpatel,f.j.m.stroh\}@uva.nl}
\begin{document}

\maketitle

\begin{abstract}
We consider the irreducibility of switch-based Markov chains for the approximate uniform sampling of Hamiltonian cycles in a given undirected dense graph on $n$ vertices. As our main result, we show that every pair of Hamiltonian cycles in a graph with minimum degree at least $n/2+7$ can be transformed into each other by switch operations of size at most $10$, implying that the switch Markov chain using switches of size at most $10$ is irreducible. As a proof of concept, we also show that this Markov chain is rapidly mixing on dense monotone graphs.
\end{abstract}

\section{Introduction}
In this work, we consider the problem of sampling Hamiltonian cycles in dense graphs using switch-based Markov chains. Throughout, let $G$ be an $n$-vertex graph and denote its minimum degree by $\delta(G)$. A Hamiltonian cycle of $G$ is a simple cycle of $G$ that includes every vertex. A classical theorem by Dirac \cite{Dirac1952} states that if $\delta(G) \geq n/2$ then $G$ has a Hamiltonian cycle. Moreover, it is well known that in general it is NP-complete to decide if $G$ has a Hamiltonian cycle even if $\delta(G) \geq (\frac{1}{2} - \varepsilon) n$.

 Dyer, Frieze, and Jerrum \cite{Dyer1998} considered the question of counting and sampling Hamiltonian cycles in dense graphs. 
 They consider a Markov Chain Monte Carlo (MCMC) approach for solving the sampling problem. Here, one defines a suitable Markov chain on the (exponentially large) set of all Hamiltonian cycles, and shows that it is rapidly mixing, i.e., only a polynomial number of steps of the chain are needed in order to obtain a sample that is close to uniform. In particular, they give a \emph{fully-polynomial almost uniform sampler} for sampling Hamiltonian cycles from graphs $G$ with $\delta(G) \geq (\frac{1}{2} + \varepsilon)n$, which is then turned into a \emph{fully-polynomial randomised approximation scheme} for counting Hamiltonian cycles in such graphs by a standard reduction. 
 
 For the sampling problem, they take a two-step approach. First, based on a result of Jerrum and Sinclair \cite{Jerrum1990}, they show that there is a rapidly mixing Markov chain on the set of all $2$-factors of $G$ (which are all subgraphs of $G$ in which every vertex has degree $2$). Then it is shown that the number of $2$-factors in $G$ is at most a polynomial factor larger than the number of Hamiltonian cycles in $G$. This then automatically implies (roughly speaking) that if one takes a polynomial number of samples from the Markov chain that samples $2$-factors, most likely one of those samples will be a Hamiltonian cycle. This sample is then also an approximately uniform sample from the set of all Hamiltonian cycles in $G$.

At the end of their paper, Dyer, Frieze and Jerrum \cite{Dyer1998} ask if there is a rapidly mixing Markov chain on the set of Hamiltonian cycles, and possibly `near-Hamiltonian cyles', that mixes rapidly.\footnote{To be precise, in \cite{Dyer1998} they ask:  ``Second, is there a random walk on Hamilton cycles and (in some sense) “near-Hamilton cycles” which is rapidly mixing?''} As a first step towards addressing this question, we show there exist switch-based Markov chains on the set of all Hamiltonian cycles of a dense graph that converge to the uniform distribution, provided that $\delta(G) \geq \frac{1}{2}n + 7$. \\

\noindent Switch Markov chains are arguably the simplest and most natural Markov chains on the set of Hamiltonian cycles of a graph. Given a graph $G$, let $\mathcal{H}_G$ denote the set of Hamiltonian cycles of $G$. We say that $H'\in \mathcal{H}_G$ can be obtained from $H \in \mathcal{H}_G$ by a $k$-switch if $|E(H) \triangle E(H')| \leq 2k$, that is, a $k$-switch is an operation for transforming one Hamiltonian cycle into another by altering at most $2k$ of its edges.\footnote{Such operations are also widely used, for example, in heuristics for the travelling salesman problem see, e.g., \cite{Lin1973}.}

For a given constant $k \in \N$, the \emph{$k$-switch Markov chain} on $\mathcal{H}_G$ is defined as follows in this work. Given that the Markov chain is currently in state $H \in \mathcal{H}_G$, we first pick $\ell \in \{1,\dots,k\}$ uniformly at random, and then select a set $L \subseteq E(G)$ with $|L| = 2\ell$ uniformly at random. If the graph $H'$ with edge set
$$
E(H') = E(H) \triangle L
$$ 
is again in $\mathcal{H}_G$, i.e., a Hamiltonian cycle of $G$, then we transition to $H'$. Otherwise, we do nothing and stay in the state $H$.\footnote{There exist many algorithmic rules to select a switch.} See Figure~\ref{fig:switch_ex} for an example.  
\begin{figure}[ht!]
	\begin{center}
		\includegraphics[width=0.4\linewidth]{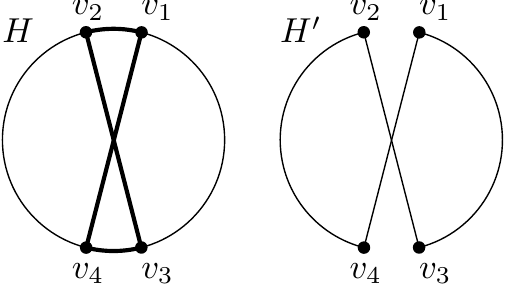}
		\caption{Example of a switch for $k=2$. Left side: The Hamiltonian cycle $H$ is the circle. We switch along the cycle $v_1v_2v_3v_4$, drawn thick. Right side: The modified graph $H'$, which is also a Hamiltonian cycle.}
		\label{fig:switch_ex}
	\end{center}
\end{figure}

It is not hard to show that the $k$-switch Markov chain will converge to the uniform distribution on $\mathcal{H}_G$ (because of symmetry of the transition probabilities) provided that the  chain is \emph{irreducible}. Irreducibility here refers to the fact that any two Hamiltonian cycles $H_1,H_2 \in \mathcal{H}_G$ can be transformed into each other by a sequence of $k$-switches.

\subsection{Our contributions}\label{sec:contributions}
The main goal of this work is to provide the first irreducibility results for the $k$-switch Markov chain. 
Given a graph $G$, we say $\mathcal{H}_G$ is \emph{$k$-switch irreducible} if for every $H, H' \in \mathcal{H}_G$, we can obtain $H'$ from $H$ by a sequence of $k$-switches, i.e., there exist $H_0, \ldots, H_r \in \mathcal{H}_G$ with $H_0 = H$ and $H_r = H'$ where $|E(H_i) \triangle E(H_{i+1})| \leq k$ for $i = 0, \ldots, r-1$. Our results are as follows. 
\begin{enumerate}[(i)]
\item We prove that $\mathcal{H}_G$ is $10$-switch irreducible if $\delta(G) \geq \frac{1}{2}n + 7$. \label{con:i} 
\item For each $k \geq 4$, we give examples of graphs $G$ satisfying $\delta(G) \geq \frac{n - 3k -4}{2}$ for which $\mathcal{H}_G$ is not $k$-switch irreducible.\label{con:ii}
\item We give examples of graphs $G$ with $\delta(G) \geq \frac{2}{3}n - 1$ for which $\mathcal{H}_G$ is not $2$-switch irreducible.\label{con:iii}
\end{enumerate}
The second item essentially establishes that, for the case $k = 10$,  the result in the first item is best possible (up to a constant-sized gap between $n/2 - 17$ and $n/2+7$).
Moreover, the third item shows that the $2$-switch Markov chain (probably the simplest Markov chain on Hamiltonian cycles) cannot be used to address the question of Dyer, Frieze and Jerrum for all dense graphs with $\delta(G) \geq n/2$.

As a proof of concept, we  show that, for dense monotone graphs $G$ (discussed in the related work section), the $k$-switch Markov chain on $\mathcal{H}_G$ is rapidly mixing. We do this by means of a meta-theorem which shows that if the $k$-switch Markov chain on $\mathcal{H}_G$ is (strongly) irreducible for some dense monotone graph $G$, then it is also rapidly mixing. 
 Here, strong irreduciblity roughly refers to the fact that if two Hamiltonian cycles are close to each other in terms of symmetric difference, we should be able to transform them into each other using a small number of $k$-switches; a formal definition is given later on. In the first item above, we show indeed this strong version of irreducibility for $k = 10$. 
 
\noindent Overall, several interesting new questions arise in light of our work and we hope our results will stimulate more work in the area. In particular, what is the smallest $k$ for which the $k$-switch Markov chain is (strongly) irreducible for dense graphs with $\delta(G) \geq \frac{n}{2} + c$, where $c$ is a (small) constant? Furthermore, given the vast interest in the $2$-switch Markov chain for other combinatorial objects (see Section \ref{sec:related_work}), what is the smallest\footnote{It is not hard to argue that the result is true for for complete graphs $G$ where $\gamma = c = 1$.} constant $\frac{2}{3} <\gamma < 1$ such that the $2$-switch Markov chain is irreducible for all dense graphs with $\delta(G) \ge \gamma n + c$ for some (small) constant $c$?

\subsection{Related work}\label{sec:related_work}
The question of irreducibility, as well as being integral to the MCMC method, is studied in its own right under the moniker of reconfiguration problems. Here, one wishes to decide whether the space of solutions to some combinatorial problem is connected (where two solutions are adjacent if one can be obtained from the other by some small prescibed change); see for example the surveys of van den Heuvel \cite{vdHeuvel2013} and Nishimura \cite{Nishimura2018}. 
Reconfiguration problems about Hamiltonian cycles have not been widely considered. Takaoka \cite{Takaoka2018} has considered the complexity of deciding whether $\mathcal{H}_G$ is  $2$-switch irreducible when $G$ belongs to particular structural graph classes. This includes a hardness result for chordal bipartite graphs, but also a result establishing the $2$-switch irreducibility of Hamiltonian cycles in unit interval graphs and  monotone graphs.
 A slightly different Hamiltonian reconfiguration problem is considered by Lignos \cite{Lignos2017}.

The mixing time of switch-based Markov chains have been studied extensively for sampling subgraphs of $K_n$ with a given degree sequence, see, e.g., \cite{Kannan1999,Cooper2007,Erdos2013,AK2019}. It is well known, see e.g. \cite{Taylor1981}, that every two graphs (thought of as subgraphs on $K_n$) with the same degree sequence can be transformed into each other with switches of size $2$ (in $K_n$). This remains true if one restricts oneself to the class of all connected subgraphs of $K_n$ with a fixed degree sequence \cite{Taylor1981}. In particular, relevant to our setting, Feder et al. \cite{Feder2006} (implicitly) show that the $2$-switch chain is rapidly mixing on the set of all Hamiltonian cycles in case $G$ is the complete graph. There are more direct ways to obtain this result, but we mention it here as we rely on some of their ideas in order to address the mixing time of the switch Markov chain on dense monotone graphs. 

Monotone graphs, also known as bipartite permutation graphs, have been widely studied from the structural graph theory perspective, perhaps most notably in their characterisation \cite{Spin} (we  define them formally in Section~\ref{sec:DMG}).
Monotone graphs have also been considered in the context of switch-based Markov chains for the sampling of perfect matchings: in particular, Dyer, Jerrum and M\"uller \cite{Dyer2017} show that the $2$-switch Markov chain for sampling perfect matchings is rapidly mixing on monotone graphs. We refer the reader to \cite{Dyer2017} for further results in this direction.

We mentioned above that Takaoka \cite{Takaoka2018} shows that the set of all Hamiltonian cycles in a given monotone graph is $2$-switch irreducible. We remark that this is established in the weak sense by showing that every Hamiltonian cycle can be transformed, by switches of size $2$, into a fixed \emph{canonical} Hamiltonian cycle. However, we need the stronger notion of irreducibility for our rapid mixing proof for dense monotone graphs to go through.

\section{Preliminaries}
Let $G = (V,E)$ be a simple undirected graph with vertex set $V = \{v_1,\dots,v_n\}$ and edge set $E = \{e_1,\dots,e_m\}$. We use the shorthand notation $uv$ to denote an edge $\{u,v\} \in E$. A $2$-factor of $G$ is a subgraph $F$ in which every vertex $v \in V$ has degree precisely $d_F(v) = 2$. We use $\mathcal{F}_G$ to denote the set of all $2$-factors of $G$. A Hamiltonian cycle is a connected $2$-factor, i.e., a simple cycle passing through all vertices of $G$. We use $\mathcal{H}_G$ to denote the set of all Hamiltonian cycles of $G$. Given two graphs $G=(V,E)$ and $G'=(V, E')$ on the same vertex set $V$, their symmetric difference  is denoted by $G \triangle G' = (V, E \triangle E') = (V, (E \setminus E') \cup (E'\setminus E)).$ 
We use $N_G(v) = \{ w : vw \in E \}$ to denote the set of neighbours of $v \in V$ in $G$ and we write $d_G(v) = |N(v)|$ for the degree of $v$ dropping subscripts when the graph is clear. 

For a given $k \geq 2$ and (finite) set $\mathcal{A}$ of graphs on some vertex set $V$, a switch of size $k$ (with respect to $\mathcal{A}$) is an operation on a given graph $F = (V,E) \in \mathcal{A}$ in which we remove exactly $k$ edges from $F$ and add exactly $k$ edges from the complement of $F$ in such a way that the resulting graph $F'$ also satisfies $F' \in \mathcal{A}$. We then define a $k$-switch to be a switch of size at most $k$.
 In this work we are mostly interested in $\mathcal{A} = \mathcal{H}_G$ or $\mathcal{A} = \mathcal{F}_G$ for a given undirected graph $G$.

Fix a graph $G$ and consider a $k$-switch with respect to $\mathcal{F}_G$ or $\mathcal{H}_G$. Writing $A$ for the $2k$ edges involved in the $k$-switch, it is easy to see that every vertex of the graph $S=(V,A)$ must have even degree (since all graphs in $\mathcal{F}_G$ or $\mathcal{H_G}$ are regular of degree $2$). Moreover, every connected component of $S$ can be thought of as an alternating circuit, i.e.\ a circuit whose edges alternate between edges in $G$ and edges not in $G$.\footnote{Recall that a circuit in $G=(V,E)$ is a sequence of $v_1e_1v_2e_2 \cdots v_{k-1}e_{k-1}v_k$ of vertices and edges where $e_i = v_iv_{i+1} \in E$, the edges $e_i$ are distinct, and $v_1=v_k$.}\\

\noindent \emph{$k$-Switch irreducibility.} For a given graph $G$ and integer $k$, we say that $\mathcal{H}_G$ is (weakly) $k$-switch irreducible if for every $H_1, H_2 \in \mathcal{H}_G$, there exists a sequence $H_1 = Z_1,\dots,Z_q = H_2
$ of Hamiltonian cycles in $\mathcal{H}_G$
such that every consecutive pair of Hamiltonian cycles $(Z_i,Z_{i+1})$ differs by a $k$-switch. 
Moreover, 
for a given class of graphs $\mathcal{G}$ and integer $k$, 
we say that $\mathcal{G}$ is \emph{strongly $k$-switch irreducible for Hamiltonian cycles} if there exists a function $\phi : \N \rightarrow \N$ with the following property: for all $G \in \mathcal{G}$, whenever  $H_1, H_2 \in \mathcal{H}_G$ with $|E(H_1)\triangle E(H_2)| \leq x$, there exists a sequence of Hamiltonian cycles $H_1 = Z_1,\dots,Z_q = H_2
$ of Hamiltonian cycles in $\mathcal{H}_G$
such that every consecutive pair of Hamiltonian cycles $(Z_i,Z_{i+1})$ differs by a $k$-switch operation and  $q \leq \phi(x)$. 

Roughly speaking, strong irreducibility states that if two Hamiltonian cycles are somewhat `close' to each other in terms of symmetric difference, then we should be able to transform one into the other with a `small' number of $k$-switches. Similarly we define (strong) irreducibility for $2$-factors. 

For a given graph $G$, we say that $\mathcal{F}_G$ (the set of $2$-factors of $G$) is (weakly) $k$-switch irreducible if for every $F_1, F_2 \in \mathcal{F}_G$, there exists a sequence $F_1 = Z_1,\dots,Z_q = F_2$ of $2$-factors in $\mathcal{F}_G$
such that every consecutive pair of $2$-factors $(Z_i,Z_{i+1})$ differs by a $k$-switch. 
For a given class of graphs $\mathcal{G}$ and integer $k$, 
we say that $\mathcal{G}$ is \emph{strongly $k$-switch irreducible for $2$-factors} if there exists a function $\phi : \N \rightarrow \N$ with the following property: for all $G \in \mathcal{G}$, whenever  $H_1, H_2 \in \mathcal{H}_G$ with $|E(H_1)\triangle E(H_2)| \leq x$, there exists a sequence $H_1 = Z_1,\dots,Z_q = H_2
$ of Hamiltonian cycles in $\mathcal{H}_G$
such that every consecutive pair of Hamiltonian cycles $(Z_i,Z_{i+1})$ differs by a $k$-switch operation and  $q \leq \phi(x)$.\\
\medskip

\noindent 
\emph{Markov chains and mixing times.} The preliminaries here will only be required in Section~\ref{se:rapid mixing} onwards. We write $\mathcal{M} = (\Omega,P)$ to denote an aperiodic, irreducible and time-reversible Markov chain $\mathcal{M}$ on state space $\Omega$ with transition matrix $P$.  We write $P^t(x,\cdot)$ for the distribution over $\Omega$ at time step $t$ given that the initial state is $x \in \Omega$. 
The \emph{total variation distance} of this distribution from the (unique) stationary distribution $\pi$ at time $t$ with initial state $x$ is 
\[
\Delta_x(t) = \max_{S \subseteq \Omega} \big| P^t(x,S) - \pi(S)\big| = \frac{1}{2}\sum_{y \in \Omega} \big| P^t(x,y) - \pi(y)\big| \,,
\]
and the \emph{mixing time} of $\mathcal{M}$ is 
\[
\tau(\epsilon) = \max_{x \in \Omega}\min\{ t : \Delta_x(t') \leq \epsilon \text{ for all } t' \geq t\}. 
\]
Informally, $\tau(\epsilon)$  is the number of steps until the Markov chain is $\epsilon$-close to its stationary distribution. When $\pi$ is the uniform distribution over $\Omega$, we say that a Markov chain is to be \emph{rapidly mixing} if the mixing time can be upper bounded by a function polynomial in $\ln(|\Omega|/\epsilon)$. 

As the Markov chains we consider are time-reversible, the matrix $P$ only has real eigenvalues, that we denote by $1 = \lambda_0 > \lambda_1 \geq \lambda_2 \geq \dots \geq \lambda_{|\Omega|-1} > -1$.  We can always replace the transition matrix $P$ of the Markov chain by $(P+I)/2$, to make the chain \emph{lazy}, and, hence,  guarantee that all its eigenvalues are non-negative. It then follows that the second-largest eigenvalue of (the new transition matrix) $P$ is $\lambda_1$. In this work we always consider the lazy versions of the Markov chains involved, but we do not always mention this explicitly.  
It follows directly from Proposition 1 in \cite{Sinclair1992} that
$$\tau(\epsilon) 
\leq  \frac{1}{1 - \lambda_1} \big(\ln(1/\pi_*) + \ln(1/\epsilon)\big),
$$
where $\pi_* = \min_{x \in \Omega} \pi(x)$. When $\pi$ is the uniform distribution, the above bound reduces to
$$\tau(\epsilon) 
\leq \frac{1}{1 - \lambda_1}(\ln(|\Omega|) + \ln(1/\epsilon)).
$$
The quantity $(1 - \lambda_1)^{-1}$ can be upper bounded using the \emph{multicommodity flow method} of Sinclair  \cite{Sinclair1992}.

We define the state space graph of the chain $\mathcal{M}$ as the directed graph $\mathbb{G}$ with vertex set $\Omega$ that contains exactly the arcs $(x,y) \in \Omega \times \Omega$ for which $P(x,y) > 0$ and $x \neq y$. Let $\mathcal{P} = \cup_{x \neq y} \mathcal{P}_{xy}$, where $\mathcal{P}_{xy}$ is the set of simple paths between $x$ and $y$ in $\mathbb{G}$.
A \emph{flow} $f$ in $\Omega$ is a function $\mathcal{P} \rightarrow [0,\infty)$ with the property 
$\sum_{p \in \mathcal{P}_{xy}} f(p) = \pi(x)\pi(y)$ for all $x,y \in \Omega, x \neq y$.
The flow $f$ can be extended to a function on oriented edges of $\mathbb{G}$ by setting 
$f(e) =  \sum_{p \in \mathcal{P} : e \in p } f(p)$,
so that $f(e)$ is the total flow routed through the edge $e \in E(\mathbb{G})$. Let $\ell(f) = \max_{p \in \mathcal{P} : f(p) > 0} |p|$ be the length of a longest flow carrying path, and let 
$
\rho(e) = f(e)/Q(e)
$
be the \emph{load} of the edge $e$, where $Q(e) = \pi(x)P(x,y)$ for $e = (x,y)$.
The maximum load of the flow is then given by 
$
\rho(f) = \max_{e\in E(\mathbb{G})} \rho(e).
$ 
Sinclair, in Corollary $6^{\,\prime}$ of \cite{Sinclair1992}, shows that 
$$
(1 - \lambda_1)^{-1} \leq \rho(f)\ell(f).
$$ 

We  use the following (by now standard) technique for bounding the maximum load of a flow in case the chain $\mathcal{M}$ has uniform stationary distribution $\pi$. 
Suppose $\theta$ is the smallest positive transition probability of the Markov chain between two distinct states in $\Omega$.
If $b$ is such that $f(e) \leq b / |\Omega|$ for all $e\in E(\mathbb{G})$, then it follows that
$
\rho(f) \leq b/\theta
$.
This implies that
\begin{equation*}
\tau(\epsilon) \leq \frac{\ell(f)\cdot b}{\theta}\ln(|\Omega|/\epsilon)\,.
\end{equation*}
Now, if $\ell(f), b$ and $1/\theta$ can be bounded by a function polynomial in $\ln(|\Omega|)$, it follows that
the Markov chain $\mathcal{M}$ is rapidly mixing. In this case, we say that $f$ is an \emph{efficient} flow.
Note that in this approach the transition probabilities do not play a role as long as $1/\theta$ is polynomially bounded.

\section{Irreducibility of $k$-switch Markov chain}
In this section we will prove various results regarding the (non)-irreducibility of the $k$-switch Markov chain. The main result of this section is Theorem \ref{thm:main_irr} below. Afterwards, we provide various examples of non-irreducibility for certain combinations of $\delta(G)$ and $k$.

\begin{theorem}\label{thm:main_irr}
	If a graph $G$ satisfies $\delta(G) \geq \frac{1}{2}n + 7$, then the set $\mathcal{H}_G$ of all Hamiltonian cycles of $G$ is $10$-switch irreducible. Moreover, the class of graphs $G$ for which $\delta(G) \geq \frac{1}{2}n + 7$ is strongly $10$-switch irreducible for Hamiltonian cycles. 
\end{theorem}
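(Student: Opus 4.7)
The plan is to prove both statements at once by induction on the symmetric difference $x := |E(H_1) \triangle E(H_2)|$. The goal will be to show that whenever $x > 0$, one can perform a $10$-switch on $H_1$ producing some $H_1' \in \mathcal{H}_G$ with $|E(H_1') \triangle E(H_2)| \leq x - c$ for an absolute constant $c > 0$. Iterating gives a reconfiguration sequence of length at most $x/c$, witnessing strong $10$-switch irreducibility with $\phi(x) = x/c$; weak irreducibility then follows, since $x \leq 2n$ always.

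To construct such a reducing switch, the first step is to analyse $D := H_1 \triangle H_2$. Since both $H_1$ and $H_2$ are $2$-regular, every vertex has even degree in $D$, so $D$ decomposes into edge-disjoint alternating circuits $C_1,\dots,C_s$ whose edges alternate between $E(H_1)\setminus E(H_2)$ and $E(H_2)\setminus E(H_1)$. If some $C_i$ is short, say $|E(C_i)| \leq 20$, the natural attempt is to switch along $C_i$, replacing its $H_1$-edges by its $H_2$-edges. The resulting graph is a $2$-factor but need not be a single Hamiltonian cycle. If it is, we are done; otherwise, using the Dirac-type surplus $\delta(G) \geq n/2+7$, one locates ``repair'' edges outside $H_1 \cup H_2$ that merge the components into one cycle, at the cost of substituting a few of the edges of the original switch, keeping the total switch size at most $10$.

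When every alternating circuit is long, the strategy is instead local. One picks a constant-length alternating subpath of some $C_i$ and performs a ``shortcut'' $10$-switch: remove a short block of $H_1$-edges around this subpath and add a matching block chosen from $E(H_2)$ or from the complement $E(G) \setminus (E(H_1) \cup E(H_2))$, arranged so that the result is a single Hamiltonian cycle and so that the symmetric difference with $H_2$ strictly decreases. The existence of enough candidate helper edges is guaranteed by the common-neighbours inequality $|N(u) \cap N(v)| \geq 2\delta(G) - n \geq 14$, which lets one avoid the constantly many ``forbidden'' endpoints forced by $H_1$ and the chosen subpath.

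The hard part will be preserving Hamiltonicity: most natural modifications along a segment of an alternating circuit split $H_1$ into several disjoint cycles, and reconnecting them eats into the edge budget. The constants $10$ and $+7$ in the theorem appear to be tuned precisely so that the switch budget is just large enough to combine the ``reducing'' operation with any necessary reconnection, while the density surplus ensures helper edges exist even after a constant number of endpoints have been forbidden. Hence the bulk of the proof will be a careful case analysis on the structure of the alternating circuits of $D$ (short versus long, vertex-disjoint versus interacting, whether the reducing switch locally splits the cycle, and so on), verifying in each case the existence of a reducing $10$-switch.
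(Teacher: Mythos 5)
Your high-level strategy is the right one and matches the paper's: induct on $x=|E(H_1)\triangle E(H_2)|$, find a single $10$-switch that strictly decreases $x$, and use the surplus $|N(u)\cap N(v^{\pm})|\ge 2\delta(G)-n\ge 14$ to dodge a constant number of forbidden vertices. But the proposal defers exactly the two constructions on which the theorem lives, and as sketched both would fail. First, in your ``short circuit'' case you propose switching along an alternating circuit with up to $20$ edges. Such a circuit contains up to $10$ edges of $H_1$; deleting them cuts $H_1$ into up to $10$ paths, so the resulting $2$-factor can have up to $10$ components, and reconnecting that many components cannot possibly fit inside the remaining $10$-switch budget. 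The paper only switches directly along circuits of $4$ or $6$ edges (which create at most $2$ or $3$ components); in all other cases it extracts a fixed alternating walk $a_1a_2a_3a_4a_5a_6$ with $a_1a_2,a_3a_4,a_5a_6\in H_1$ and $a_2a_3,a_4a_5\in H_2$ and closes it into a circuit of length $8$ using an edge $bc$ of $H_1$ with $b\in N(a_1)$, $c=b^+\in N(a_6)$, choosing $b$ outside the $12$ vertices $a_i^{\pm}$ precisely so that the resulting $4$-switch leaves at most $3$ components.

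Second, in your ``long circuit'' case you require the local shortcut switch to directly produce a single Hamiltonian cycle; this is the hard part, and you give no mechanism for it. The paper does not attempt this: it deliberately allows the reducing $4$-switch to disconnect the cycle, and then invokes a separate Reconnecting Lemma stating that under $\delta(G)\ge n/2+1$ any $2$-factor with $t$ components can be turned into a Hamiltonian cycle by $t-1$ switches of size at most $3$, \emph{each of which does not increase the symmetric difference with the target} $H_2$ (this is achieved by always switching along a circuit through an edge $vw\in E(H_2)$ joining two components, so the added cross-edges are paid for by deleted non-$H_2$ edges). The budget $4+2\cdot 3=10$ is exactly what the statement's constant $10$ encodes. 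Your plan mentions ``repair edges'' but establishes neither their existence nor --- the real crux --- that the repairs never increase $|{\cdot}\triangle H_2|$; without that monotonicity the induction does not close. To complete the argument you need to (i) restrict the direct-switch case to circuits of length at most $6$, (ii) prove the bound of $3$ components for the reducing $4$-switch via the choice of $bc$, and (iii) prove the Reconnecting Lemma.
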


\begin{remark}[Bipartite case]\label{rem:bipartite}
Theorem \ref{thm:main_irr} remains true if we restrict ourselves to bipartite graphs $G = (A \cup B, E)$, where $|A| = |B| = n$, and $\delta(G) \geq \frac{1}{2}n + 7$. The proofs are almost identical, so we make remarks in footnotes where the proofs differ.
\end{remark}

In order to prove Theorem \ref{thm:main_irr}, we rely on the following lemma. It allows us to quickly reconfigure a $2$-factor $T$ into a Hamiltonian cycle $H'$ without increasing the symmetric difference with respect to some fixed Hamiltonian cycle $H$.

\begin{lemma}[Reconnecting lemma]\label{lem:reconnect}
Let $G = (V,E)$ be an undirected graph with minimum degree $\delta(G) \geq \frac{1}{2}n + 1$,  
and let $H$ be a fixed Hamiltonian cycle in $G$. 
Let $T$ be an arbitrary $2$-factor of $G$ with $t$ components. 

Then there exists a Hamiltonian cycle $H'$, so that $T$ can be transformed into $H'$ with at most $t-1$ switches of size at most $3$, and for which
\begin{equation}\label{eq:symm_decrease}
|H' \triangle H| \leq |T \triangle H|.
\end{equation}
\end{lemma}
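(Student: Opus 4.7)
The plan is to proceed by induction on the number of components $t$ of $T$. The base case $t=1$ is immediate: $T$ is already a Hamiltonian cycle, so set $H' := T$ and perform no switches. For the inductive step it suffices to exhibit a single switch of size at most $3$ transforming $T$ into a $2$-factor $T^\ast$ with $t-1$ components and $|T^\ast \triangle H| \leq |T \triangle H|$; applying the induction hypothesis to $T^\ast$ (with $H$ unchanged) then delivers the desired Hamiltonian cycle $H'$.

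To find such a merge I would exploit that contracting each component of $T$ turns $H$ into a connected Eulerian multigraph on $t$ vertices, so there exists an $H$-edge $vw$ with endpoints in distinct $T$-components $C_1 \ni v$ and $C_2 \ni w$; note $vw \notin T$. Let $v_1, v_2$ be the $T$-neighbours of $v$ in $C_1$ and $w_1, w_2$ the $T$-neighbours of $w$ in $C_2$. The canonical size-$2$ merge of $C_1$ and $C_2$ removes $X = \{vv_i, ww_j\}$ from $T$ and adds $Y = \{vw, v_iw_j\}$; this is a legal switch as soon as $v_iw_j \in E(G)$, because $v_iw_j \notin T$ follows from $v_i \in C_1$, $w_j \in C_2$. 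The effect on symmetric difference is $|T^\ast \triangle H| - |T \triangle H| = 2(|X \cap E(H)| - |Y \cap E(H)|)$, and $vw \in E(H)$ forces $|Y \cap E(H)| \geq 1$. Since $H$ has degree $2$ at each of $v$ and $w$ and one $H$-edge at each is already $vw$, at most one $i$ satisfies $vv_i \in E(H)$ and similarly at most one $j$ satisfies $ww_j \in E(H)$. Hence at most one of the four pairs $(i,j) \in \{1,2\}^2$ can be \emph{bad} in the sense that $|X \cap E(H)| = 2$; for each of the remaining (at least three) \emph{safe} pairs the size-$2$ switch is automatically non-increasing in symmetric difference.

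It therefore suffices to find a safe pair $(i,j)$ with $v_iw_j \in E(G)$. By $\delta(G) \geq n/2 + 1$, each of $v_1, v_2, w_1, w_2$ has at most $n/2 - 2$ non-neighbours in $G$, which yields such an edge in the typical case. The main obstacle, and where I expect most of the technical work to lie, is the residual configuration in which all three safe pairs miss in $E(G)$. There I would fall back on a size-$3$ move by choosing an auxiliary vertex $x$ via the common-neighbour estimate $|N(a) \cap N(b)| \geq 2(n/2 + 1) - n = 2$ (valid for any $a, b \in V$ under $\delta(G) \geq n/2 + 1$), picking $x$ adjacent in $G$ to a suitable pair from $\{v_1, v_2, w_1, w_2\}$ and deleting one $T$-edge at $x$ together with appropriate $T$-edges at $v$ and $w$ to build a merging alternating $6$-circuit. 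The delicate part is the symmetric-difference accounting with $|X| = |Y| = 3$, since $|X \cap E(H)|$ can in principle reach $3$; this forces one to use both the degree-$2$ constraint of $H$ at every visited vertex and the freedom to re-select $(v, w)$ among the at least $t - 1$ $H$-edges crossing distinct $T$-components, so that in every configuration some size-$\leq 3$ switch simultaneously merges two components and preserves the bound $|T^\ast \triangle H| \leq |T \triangle H|$.
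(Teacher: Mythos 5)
Your skeleton (induction on $t$, a crossing $H$-edge $vw$, and the accounting identity $|T^\ast\triangle H|-|T\triangle H| = 2(|X\cap E(H)|-|Y\cap E(H)|)$) matches the paper's, but the case you yourself flag as ``the delicate part'' is exactly where the whole difficulty sits, and your proposed tool for it does not work as stated. The plain common-neighbourhood bound $|N(a)\cap N(b)|\geq 2$ gives a vertex $x$ adjacent to both $a$ and $b$, but that does not yield an alternating merging circuit: for a size-$3$ switch through $v,w$ you need to delete three $T$-edges and add three $G\setminus T$-edges around a closed walk, i.e.\ a circuit of the form $v\,a\,x\,y\,b\,w\,v$ in which $va$, $xy$, $bw$ are $T$-edges and $ax$, $yb$, $wv$ are non-$T$-edges of $G$. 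A single common neighbour of $a$ and $b$ cannot close such a circuit; what is needed is the \emph{shifted} intersection $\{u^{+}: u\in N(a)\}\cap N(b)\neq\emptyset$ (with $^{+}$ the successor along an orientation of the $T$-cycles), which produces a $T$-edge $xy$ with $ax, yb\in E(G)$. This is the step your sketch is missing, and without it the fallback case is not established.

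The second missing idea is the one that makes the symmetric-difference bookkeeping automatic and removes any need to ``re-select $(v,w)$'': since $v$ has $H$-degree $2$ and $vw\in E(H)$, at least one $T$-neighbour $a$ of $v$ satisfies $va\notin E(H)$, and likewise for $b$ at $w$. Fixing $a$ and $b$ this way, the size-$3$ switch along $v\,a\,x\,y\,b\,w\,v$ deletes $va, xy, bw$ with $|X\cap E(H)|\leq 1$ (only $xy$ can lie in $H$) and adds $ax, yb, vw$ with $|Y\cap E(H)|\geq 1$ (because of $vw$), so the difference never increases — no enumeration of ``safe pairs'' and no residual bad configuration remains, apart from a few degenerate choices of $y$ (e.g.\ $y\in\{a,b^{+},w,v^{+}\}$) that are handled by even smaller switches. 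Your size-$2$ ``safe pair'' analysis is fine as far as it goes, but it is not needed, and the argument you defer is precisely the content of the lemma.
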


\begin{proof}
	Let $t$ be the number of components of $T$. We will prove the statement in the lemma using induction. If $t = 1$ then $T$ is Hamiltonian and we are done as we may take $H' = T$. Suppose $t > 1$. Let $C_1,\dots,C_t$ denote the cyclic components of $T$. Since $H$ is Hamiltonian, there must be some edge $vw \in E(H)$ connecting two components of $T$ (see Figure~\ref{fig:reconn1}). We assume without loss of generality that $vw$ connects $C_1$ and $C_2$, i.e\ that $v \in V(C_1)$ and $w \in V(C_2)$ (by renumbering if necessary).	Moreover, since $v$ has degree two in $H$ and $vw \in E(H)$, it must be that there exists an $a \in V(C_1)$ (one of the two neighbours of $v$ in $T$) so that $va \in E(T)$, but $va \notin E(H)$. Similarly, there is a $b \in V(C_2)$ so that $wb \in E(T)$, but $wb\notin E(H)$.

	We assign orientations to $C_1, \dots, C_t$. We will call $v^+$ the vertex following $v$ in the appropriate orientation and $v^-$ the vertex preceding $v$. We choose the orientations on $C_1$ and $C_2$ such that $v = a^+$ and $b = w^+$, see Figure~\ref{fig:reconn1}, 
	\begin{figure}[ht!]
		\begin{center}
			\includegraphics[width=0.75\linewidth]{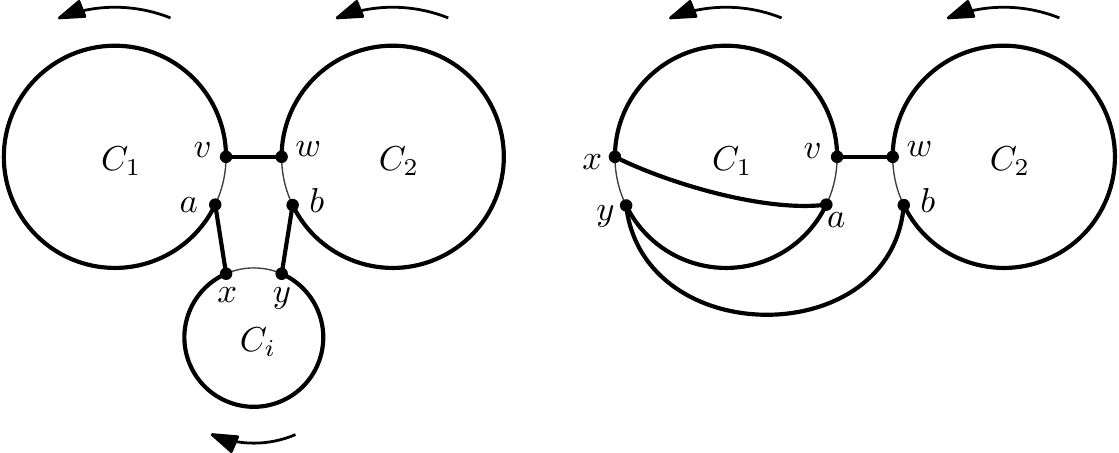}
			\caption{Two situations in the general case. The thick black line shows the cycle after the switch. Left: $xy$ is on a third cycle. Right: $xy$ is on $C_1$.}
			\label{fig:reconn1}
		\end{center}
	\end{figure}
	and we assign arbitrary orientations on $C_3, \dots, C_t$. Consider $X:=\{v^+\mid v\in N(a)\}$. As $\delta(G)\geq \frac{1}{2}n+1$, $|X|\geq \frac{1}{2}n+1$. Also consider $N(b)$, and note that we have $|N(b)|\geq \frac{1}{2}n+1$. Therefore $|X\cap N(b)|\neq \emptyset$. Select $y\in X\cap N(b)$ and set $x=y^-$ noting that $ax\in E(G)$.\footnote{In the case of bipartite graphs (see Remark~\ref{rem:bipartite}), we note that $avwb$ is a path of $G$ so $a$ and $b$ are in different parts, say $a \in A$ and $b\in B$. Then $X \subseteq A$ with $|X| \geq \frac{1}{2}n +1$ and $N(b) \subseteq A$ with $|N(b)| \geq \frac{1}{2}n + 1$, so $X \cap N(b) \not= \emptyset$ and we continue.} 
	If $y\notin \{a,b^+,w,v^+\}$, the general case, we now switch along the cycle $vaxybwv$; see Figure~\ref{fig:reconn1}. Note that the edge $xy$ may lie on $C_1$, $C_2$ or a different cycle $C_i$. In all these cases, we do not increase $|T\Delta H|$, as $vw \in E(H)$ and $va, bw \notin E(H)$. If $xy \notin E(C_1\cup C_2)$, we decrease the number of cycles by two, otherwise by one.
	\begin{figure}		
		\begin{center}
			\includegraphics[width=0.75\linewidth]{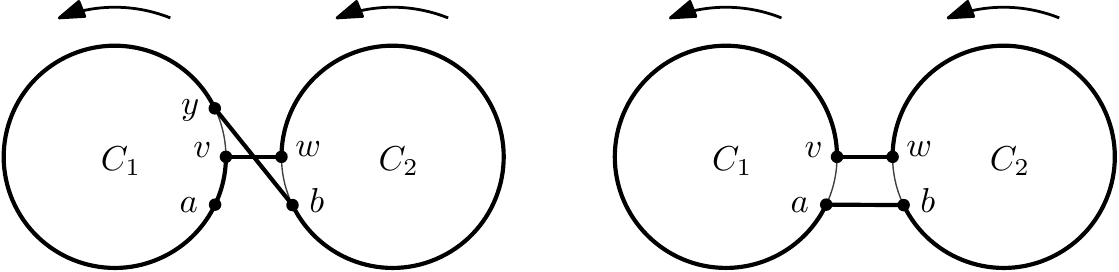}
			\caption{Two situations from the special cases. Left: $y=v^+$, Right: $y=a$, $y=b^+$}
			\label{fig:reconn2}
		\end{center}
	\end{figure}
	For the special cases $y\in \{a,b^+,w,v^+\}$, we switch along different cycles as follows; see Figure~\ref{fig:reconn2}.
	If $y=v^+$, we switch along the cycle $vybwv$. If $y=w$, we switch along the cycle $vaxwv$.
	If $y\in \{a,b^+\}$, then $ab \in E(G)$, and we switch along the cycle $vabwv$. 
	It is easy to see that in these cases we decrease $|T\Delta H|$ by at least two and we decrease the number of cycles by one.
	
	In any case, the resulting $2$-factor has fewer components and the symmetric difference is not larger. Repeated application of this procedure proves the statement of the lemma.
\end{proof}

We now continue with the proof of Theorem \ref{thm:main_irr}.

\begin{proof}[Proof of Theorem \ref{thm:main_irr}]
	We claim that for two given Hamiltonian cycles $H_1$ and $H_2$ there is  a switch of size at most $4$ that transforms $H_1$ into a $2$-factor $T$ with at most $3$ components such that $|T \triangle H_2| < |H_1 \triangle H_2|$.
The theorem then follows from Lemma~\ref{lem:reconnect} since with two switches of size at most $3$, we can transform $T$ into some Hamiltonian cycle $H'$ satisfying 
\[
|H' \triangle H_2| \leq |T \triangle H_2| < |H_1 \triangle H_2|.
\]
In particular we can transform $H_1$ to $H'$ with a switch of size at most $4 + 2 \times 3 = 10$, and repeating this we can transform $H_1$ into $H_2$ with at most $x = |H_1 \triangle H_2|$ switches of size $10$, proving the theorem (where we take $\phi(x)=x$ in the definition of strong irreducibility). 

We now prove the claim. Note that the symmetric difference of $H_1$ and $H_2$ is the vertex-disjoint union of circuits in which edges alternate between $H_1$ and $H_2$ and the circuits visit each vertex zero, one, or two times. If the symmetric difference of $H_1$ and $H_2$ contains such alternating circuits with four or six edges (corresponding to switches of size $2$ or $3$), the claim obviously holds, so assume otherwise. 
In this case it is not hard to see that we can find an $H_1,H_2$-alternating walk 
 $P = a_1a_2a_3a_4a_5a_6$ (here the $a_i$ are vertices and $a_1 $ and $a_6$ are distinct)  
 such that the $a_1a_2, a_3a_4, a_5a_6$ are edges of $H_1$, and $a_2a_3$, $a_4a_5$ are edges of $H_2$.

	We try to find vertices $b$ and $c$ that are neighbours on $H_1$ such that $b\in N(a_1)$ and $c \in N(a_6)$. Then the circuit $C:=a_1a_2a_3a_4a_5a_6cba_1$ is a $4$-switch for $H_1$. Deleting the edges $a_1a_2$, $a_3a_4$, $a_5a_6$ and $cb$ divides $H_1$ into four paths and adding $a_2a_3$, $a_4a_5$, $a_6c$ and $ba_1$ can connect some of these paths again. 

Therefore, switching $H_1$ along $C$ can produce at most $4$ connected components, and this only happens if the four edges $a_2a_3$, $a_4a_5$, $a_6c$ and $ba_1$ connect each path into a cycle (see Figure~\ref{fig:corr1}, left side). If one of the paths is just an isolated vertex, it cannot be connected to itself in this way. It is easy to check that $4$ components are produced if and only if the vertices $a_1, a_2, \ldots, a_6, c, b$ are distinct and appear in that order along $H_1$ (as in Figure~\ref{fig:corr1}, left side).
	\begin{figure}[ht!]
		
		\begin{center}
			\includegraphics[width=0.6\linewidth]{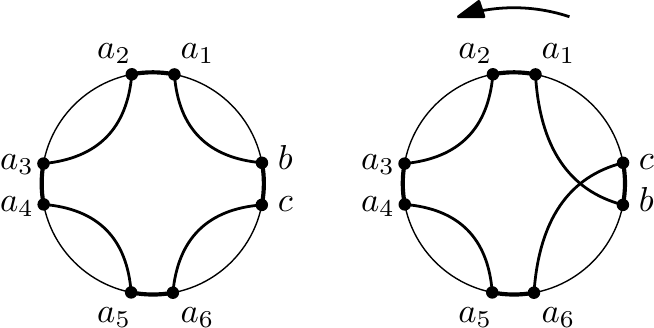}
			\caption{Left side: The circle is $H_1$. The only way that a $4$-switch (thick lines) leads to four components is the shown configuration. Right side: Choosing $c$ to follow $b$ leads to at most three components. Note that in general the edges $a_3a_4$ and $a_5a_6$ could appear in different places and orientations.}
			\label{fig:corr1}
		\end{center}
	\end{figure}
	To prevent this, we choose $b$ and $c$ as follows: orient $H_1$ so that $a_2$ follows $a_1$. We call the vertex following a vertex $v$ in this orientation $v^+$ and the previous vertex $v^-$. Set $M=\{v^-: v\in N(a_6)\}$ and consider $N(a_1)\cap M$.  As both $|N(a_1)|,|M|\geq n/2+7$ we have $|N(a_1)\cap M|\geq 2 \cdot 7 = 14$.\footnote{In the case of bipartite graphs (see Remark~\ref{rem:bipartite}), we note that $a_1a_2a_3a_4a_5a_6$ is a walk in $G$ and so $a_1$ and $a_6$ are in different parts; say $a_1 \in A$ and $a_6 \in B$. Then $N(a_1), M \subseteq B$, so since $|N(a_1)|, |M| \geq \frac{1}{2}n+7$, so $|N(a_1) \cap M| \geq 14$, and we continue as before.}
	 Select $b \in (N(a_1)\cap M)\setminus \{a_i^+,a_i^-, i=1,\dots,6\}$ and set $c=b^+$.
	 This ensures that the resulting $4$-switch (along the circuit $C:=a_1a_2a_3a_4a_5a_6cba_1$) produces at most three components.
	
	Finally, if $T$ is the $2$-factor produced by switching $H_1$ along $C$, then compared to $H_1$, $T$ contains at least two new edges of $H_2$  (namely $a_2a_3, a_4a_5$) but $T$ may have lost one edge of $H_2$ (namely $bc$ if it was in fact an edge of $H_2$), giving a net gain of one. Since $T$ and $H_1$ have the same number of edges, we see that $|T \triangle H_2| \leq |H_1 \triangle H_2| - 1$, as required.
\end{proof}

We also give a version of Theorem~\ref{thm:main_irr} for $2$-factors, instead of Hamiltonian cycles, that we will need later. The proof is a simplification of Theorem~\ref{thm:main_irr} and so we defer its proof to the appendix.

\begin{proposition}
\label{pr:2-factor}
The class of graphs $G$ for which $\delta(G) \geq \frac{1}{2}n + 7$ is strongly $4$-switch irreducible for $2$-factors.
	
	For bipartite graphs the following holds.
	The class of bipartite graphs $G = (A \cup B, E)$  with bipartition $A \cup B$, where $|A| = |B| = n$, and $\delta(G) \geq \frac{1}{2}n + 7$ is strongly $k$-switch irreducible for $2$-factors. 
\end{proposition}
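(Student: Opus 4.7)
The plan is to mirror the proof of Theorem~\ref{thm:main_irr} with substantial simplifications, since $2$-factors need not be connected. Any alternating circuit in $E(F_1) \triangle E(F_2)$ can be flipped in a single switch and the result is automatically a $2$-factor because all vertex degrees are preserved; so no analogue of the Reconnecting Lemma is needed.

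Given $2$-factors $F_1 \neq F_2$ with $|E(F_1) \triangle E(F_2)| \leq x$, it suffices to exhibit a single switch of size at most $4$ transforming $F_1$ into a $2$-factor $F_1'$ with $|E(F_1') \triangle E(F_2)| < |E(F_1) \triangle E(F_2)|$; iterating at most $x$ times then yields $\phi(x) = x$. To this end I would first decompose $E(F_1) \triangle E(F_2)$ into edge-disjoint alternating circuits. If some circuit has at most $8$ edges, flipping it is already a switch of size at most $4$ that strictly decreases the symmetric difference. Otherwise every alternating circuit has length at least $10$, and from any such circuit one extracts an alternating walk $a_1 a_2 a_3 a_4 a_5 a_6$ with $a_1 a_2, a_3 a_4, a_5 a_6 \in E(F_1) \setminus E(F_2)$ and $a_2 a_3, a_4 a_5 \in E(F_2) \setminus E(F_1)$.

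Orient the cycles of $F_1$ arbitrarily and write $v^+, v^-$ for the successor and predecessor of $v$. The degree count from the proof of Theorem~\ref{thm:main_irr} transfers verbatim: $\delta(G) \geq n/2 + 7$ forces $|N(a_1) \cap \{v^- : v \in N(a_6)\}| \geq 14$, which leaves room to pick $b \in N(a_1)$ outside $\{a_i^+, a_i^- : 1 \leq i \leq 6\}$ and set $c = b^+$, so that $a_6 c \in E(G)$. In the bipartite case the same bound holds because $a_1$ and $a_6$ lie in opposite parts, so both $N(a_1)$ and $\{v^- : v \in N(a_6)\}$ sit in the same part of size $n$. Performing the switch along the closed walk $a_1 a_2 a_3 a_4 a_5 a_6 c b a_1$ (remove $a_1 a_2, a_3 a_4, a_5 a_6, b c$; add $a_2 a_3, a_4 a_5, a_6 c, b a_1$) preserves all degrees, hence gives a $2$-factor. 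A short accounting shows that three removed edges lie in $F_1 \setminus F_2$ and two added edges lie in $F_2 \setminus F_1$ (a guaranteed gain of $5$), while the three remaining changes $b c, b a_1, a_6 c$ contribute at most $+3$ to the symmetric difference, so $|E(F_1') \triangle E(F_2)|$ drops by at least $2$.

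The main obstacle I anticipate is purely bookkeeping: justifying that a walk on six distinct vertices exists inside any alternating circuit of length at least $10$. Since each vertex appears at most twice in a single alternating circuit (its degree in the symmetric difference is at most $4$), the possible obstructions are very restricted and can be handled either by shifting the starting point along the circuit or, in a fully degenerate configuration, by exhibiting a shorter alternating cycle in the same symmetric difference and thereby reducing to the first case. This is the only step that really needs some casework in the appendix; everything else is a direct simplification of Theorem~\ref{thm:main_irr}.
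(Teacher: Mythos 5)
Your proposal follows essentially the same route as the paper's own proof: decompose $E(F_1)\triangle E(F_2)$ into alternating circuits, flip a short circuit directly if one exists, and otherwise extract an alternating walk $a_1\cdots a_6$ and close it into a $4$-switch via a vertex pair $b,c$ consecutive on $F_1$ found by the $|N(a_1)\cap M|\geq 14$ intersection argument, with the same bookkeeping showing the symmetric difference strictly decreases (the paper only claims a drop of $1$; your count of $2$ is also valid). The only cosmetic differences are that you flip circuits of length up to $8$ rather than only $4$ or $6$, and you worry about all six $a_i$ being distinct where the paper only needs $a_1\neq a_6$; neither affects correctness.
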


We continue with examples showing non-irreducibility under certain assumptions on $\delta(G)$ and $k$, as stated in contributions \eqref{con:ii} and \eqref{con:iii} in Section \ref{sec:contributions}.

\begin{example}[The case $\delta(G) = \frac{2n}{3}-1$ and $k = 2$]
	Construct $G=(V,E)$ as follows: Set $V = A_1\cup A_2 \cup A_3$, where $|A_i| = n/3=:m$. For convenience, we select $n$ such that $m$ is odd and $m\geq 3$. We denote the vertices of $A_i$ by $v_{i,j}$ for $j = 1,\dots, m$.
 Take as edge set $E$ all edges between vertices in $A_1$, all edges between vertices in $A_3$, and all edges from vertices in $A_i$ to vertices in $A_{i+1}$ for $i=1,2$ (see Figure \ref{fig:nonIrr1}).
	
	We color edges as follows: All edges incident to a vertex in $A_1$ are colored blue, and all other edges red.
	Note that all cycles of length 4 contain an even number of red and blue edges.
	This means that any switch along a 4-cycle preserves the parity of red and blue edges.
	
	We will finish the construction by describing two Hamiltonian cycles $H_1$ and $H_2$ that have different parities of blue edges. As any 2-switches preserve the parity of blue edges, $H_1$ cannot be converted to $H_2$ via 2-switches.
	
	The blue edges in $H_1$ are $v_{2,1}v_{1,1}$, $v_{1,k}v_{1,k+1}$ for $k=1,\dots,m-1$ and $v_{1,m}v_{2,m}$. The red edges in $H_1$ are $v_{2,k}v_{3,k}$, $v_{3,k}v_{2,k+1}$ for $k=1,m-2$ and $v_{2,m-1}v_{3,m-1}$, $v_{3,m-1}v_{3,m}$, $v_{3,m}v_{2,m}$. There are an even number of blue edges and an odd number of red edges in $H_1$.
	The Hamiltonian cycle $H_2$ is constructed by swapping the roles of the blue and red edges.

	\begin{figure}[ht!]
		
		\begin{center}
			\includegraphics[width=0.75\linewidth]{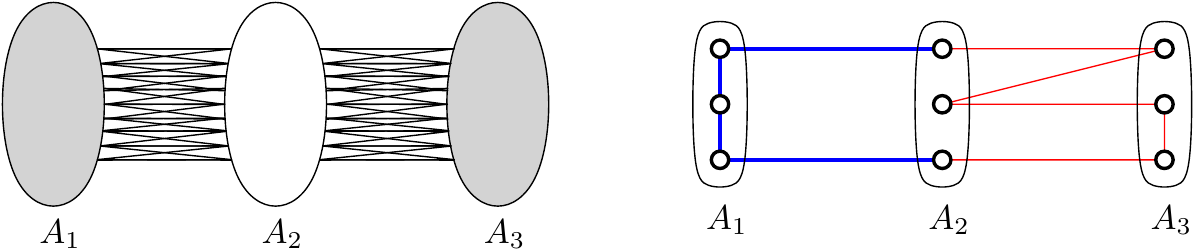}
			\caption{Left: The graph $G$. Right: The Hamiltonian cycle $H_1$ in $G$ with $n=9$. There are an even number of (thick) blue edges and an odd number of (thin) red edges.}
			\label{fig:nonIrr1}
		\end{center}
	\end{figure}	
\end{example}
	
\begin{example}[The case $\delta(G) \approx \frac{n}{2}$ for each fixed $k$.]
For $k$ fixed and $n \geq 3k+5$, there is a graph $G$ with $\delta(G) \geq (n - 3k -4)/2$ for which $\mathcal{H}_G$ is not $k$-switch irreducible.
	Our construction relies on the following lemma.
	\begin{lemma}		
		For any $\ell$, there is a graph $X$ with  $3\ell+1$ vertices that has exactly two Hamiltonian paths $H_1$ and $H_2$. Moreover, these two paths satisfy $|H_1\Delta H_2| = 2\ell$.
	\end{lemma}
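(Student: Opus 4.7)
The plan is to construct $X$ explicitly and then verify the two required properties by a combination of forced-degree arguments and finite case analysis. For $\ell = 1$, take $X$ to be the $4$-vertex graph on $\{v_0, v_1, v_2, v_3\}$ with edges $\{v_0v_1, v_1v_2, v_1v_3, v_2v_3\}$ (a pendant $v_0$ attached to the triangle $v_1v_2v_3$); the two Hamiltonian paths are $v_0v_1v_2v_3$ and $v_0v_1v_3v_2$, and $|E(H_1)\triangle E(H_2)| = |\{v_1v_2,v_1v_3\}| = 2$. For $\ell\ge 2$, take $V(X) = \{v_0, v_1, \ldots, v_{3\ell}\}$ with backbone edges $v_iv_{i+1}$ for $0\le i\le 3\ell-1$ and the $\ell$ chord edges
\[
\{v_1v_4\}\cup\{v_{3k-4}v_{3k+1}:2\le k\le \ell-1\}\cup\{v_{3\ell-4}v_{3\ell-1}\}.
\]
The intended Hamiltonian paths are $H_1 = v_0v_1v_2\cdots v_{3\ell}$ and
\[
H_2 = v_0v_1(v_4v_3v_2)(v_7v_6v_5)\cdots (v_{3\ell-2}v_{3\ell-3}v_{3\ell-4})v_{3\ell-1}v_{3\ell},
\]
so that $H_2$ uses each chord exactly once and traverses each of the $\ell-1$ three-vertex blocks $v_{3k-4}v_{3k-3}v_{3k-2}$ (for $2\le k\le \ell$) in reversed orientation.

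For the verification, first note that $v_0$ and $v_{3\ell}$ each have degree $1$ in $X$, so every Hamiltonian path of $X$ has $\{v_0, v_{3\ell}\}$ as its set of endpoints. For $\ell\ge 2$, the $\ell-1$ middle vertices $v_{3k-3}$ (for $k=2,\ldots,\ell$) each have degree exactly $2$, with neighbours $\{v_{3k-4}, v_{3k-2}\}$, so every Hamiltonian path must contain the three-vertex subpath $v_{3k-4}v_{3k-3}v_{3k-2}$ contiguously in one of its two orientations. Tracing from $v_0$, the first edge is forced to be $v_0v_1$; at $v_1$ the path may either continue to $v_2$ (which, by the forcing at $v_3$, is then followed by $v_3v_4$, and so on, reproducing $H_1$) or jump via the chord to $v_4$ (which, by the forcing at $v_3$, is then followed by $v_3v_2$, with further forcings determined by the remaining chords producing $H_2$). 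A finite case analysis at each chord-incident vertex shows that any other choice traps a yet-unvisited degree-$2$ vertex between two already-visited neighbours, so $H_1$ and $H_2$ are the only Hamiltonian paths.

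Finally, a direct comparison gives $E(H_1)\setminus E(H_2) = \{v_{3k-2}v_{3k-1}:1\le k\le \ell\}$ and $E(H_2)\setminus E(H_1)$ equal to the set of $\ell$ chord edges, so $|E(H_1)\triangle E(H_2)| = 2\ell$ as required. I expect the only delicate step to be ruling out \emph{hybrid} Hamiltonian paths that take the backbone continuation at $v_1$ but a chord jump later, or vice versa. This can be resolved by induction on the blocks: once the entry orientation of block $k$ is determined by the preceding chord, the forced-degree constraint at $v_{3k-3}$ fixes the exit vertex, which in turn matches the unique remaining valid chord or backbone edge, so the initial choice at $v_1$ propagates consistently through all $\ell-1$ blocks and no hybrid path can exist.
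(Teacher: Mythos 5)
Your proof is correct and follows essentially the same strategy as the paper's: an explicit backbone-plus-chords construction in which the degree-$1$ endpoints and the degree-$2$ interior vertices force most of the edges, leaving exactly one binary choice. The only (cosmetic) difference is how that choice is analysed --- the paper observes that the non-forced edges form a single cycle of length $2\ell$ and uses that such a cycle has exactly two perfect matchings, whereas you propagate the choice made at $v_1$ block by block; both arguments are sound, and your construction has the minor advantage of handling every $\ell \geq 1$ directly rather than reducing to odd $\ell$.
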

	\begin{proof}
		Without loss of generality let $\ell$ be odd, and set $n=3\ell+1$. Let $X = (V,E)$ with $V := \{v_1,\dots ,v_n\}$ and $E:= E_1\cup E_2$, where $E_1 = \{v_iv_{i+1} \mid 1\leq i\leq n-1\}$, and $E_2 =\{ v_jv_{j+4} \mid j \equiv 2(\text{mod }3) \text{ and } j \leq n-5\} \cup \{v_3v_{n-2}\}$; see left side of Figure~\ref{fig:nonIrr3}.
		As vertices $v_1$ and $v_n$ have degree $1$, they must be the ends of any Hamiltonian path in $X$. Vertices $v_i$ with $i \equiv 1(\text{mod }3)$ and $4\leq i \leq n-3$ have degree $2$ in $X$, so both of their incident edges must be part of any Hamiltonian path; call the set of these $2\ell$ edges $F$ and call the remaining edges $F'$. Note that the edges of $F'$ form a cycle $C \subseteq X$. In $F$, every vertex of $V$ has degree $1$ or $2$ and those vertices of degree $1$ (except for $v_1$ and $v_n$) are precisely the vertices in the cycle $C$. Therefore we can only extend $F$ to a Hamiltonian path by adding a perfect matching from $C$, and it is easy to see that adding either perfect matching from $C$ results in a Hamiltonian path. These Hamiltonian paths have symmetric difference of size $|E(C)| = |F'| = 2 \ell$.
	\end{proof}
	\begin{figure}[ht!]
		
		\begin{center}
			\includegraphics[width=0.6\linewidth]{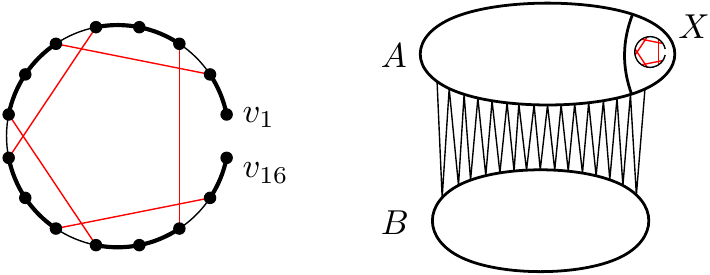}
			\caption{Left: Example of $X$ for $\ell=5$ (edges in $E_1$ are black, edges in $E_2$ are red, edges in $F$ are heavy, edges in $C$ are thin). Right: The example graph $G$ }
			\label{fig:nonIrr3}
		\end{center}
	\end{figure}
	
For the example we begin by applying the previous lemma with $\ell=k+1$ to obtain the graph $X$ of order $r:=3\ell + 1$. For any $n$ such that $n+r$ is odd, we construct our example $G$ by taking an (unbalanced)	complete bipartite graph  with parts $A$ and $B$ of size $\frac{n+(r-1)}{2}$ and $\frac{n-(r-1)}{2}$ respectively and adding a copy of $X$ inside $A$. See Figure~\ref{fig:nonIrr3}, right side.
	
As there are no edges inside $B$, any Hamiltonian cycle of $G$ must use $r-1$ edges inside $A$, and so these must be within $X$. Since $X$ has $r$ vertices, any Hamiltonian cycle of $G$ must induce a Hamiltonian path on $X$. By construction, $X$ has exactly two Hamiltonian paths $H_1$ and $H_2$, and they have a symmetric difference of $2k+2$. It is easy to see that $G$ has Hamiltonian cycles that use each of the two Hamiltonian paths in $X$, but it is impossible to perform a sequence of $k$-switches to transform a Hamiltonian cycle that uses $H_1$ into one that uses $H_2$; indeed if such a sequence existed, examining its restriction to $X$ would yield a sequence of switches of size at most $k$ that transforms $H_1$ into $H_2$ but maintaining a Hamiltonian path in $X$ at each stage; this is impossible since $X$ has only two Hamiltonian paths and their symmetric difference has size $2 \ell = 2(k+1) > k$.
\end{example}

\section{Rapid mixing for dense monotone graphs}
\label{se:rapid mixing}
In this section we will give some rapid mixing results for switch-based Markov chains for sampling Hamiltonian cycles in special classes of dense graphs. 

We first present a result for the sampling of $2$-factors using switch-based Markov chains, which will be used later on, and that might be of independent interest.
Given a graph $G$, recall the $k$-switch Markov chain on $\mathcal{H}_G$ defined in the introduction. Replacing $\mathcal{H}_G$ with $\mathcal{F}_G$ (the set of all $2$-factors of $G$) everywhere in that definition defines the $k$-switch Markov chain on $\mathcal{F}_G$.

\begin{theorem}\label{thm:2_mixing}
Let $\mathcal{G}$ be the class of all graphs $G$ on $n$ vertices with $\delta(G) \geq n/2$.  If $\mathcal{G}$ is strongly $k$-switch irreducible for $2$-factors for some $k \in \mathbb{N}$ (this is the case for $k=4$ by Proposition~\ref{pr:2-factor}) then there is an efficient multicommodity flow for the $k$-switch Markov chain on $\mathcal{F}_G$, and, in particular, this Markov chain is rapidly mixing.
\end{theorem}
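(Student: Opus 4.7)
The plan is to apply Sinclair's multicommodity flow method as recalled in the preliminaries. For each ordered pair $(F_1,F_2)$ of distinct 2-factors of $G$ I send $\pi(F_1)\pi(F_2)=1/|\mathcal{F}_G|^2$ units of flow along a canonical path in the state-space graph, obtained from the strong $k$-switch irreducibility hypothesis: there is a sequence $F_1=Z_0,Z_1,\ldots,Z_q=F_2$ in $\mathcal{F}_G$ with consecutive terms differing by a $k$-switch and $q\le\phi(|E(F_1)\triangle E(F_2)|)\le\phi(2n)$. Because the proof of Proposition~\ref{pr:2-factor} (mirroring the inductive scheme of Theorem~\ref{thm:main_irr}) yields a linear $\phi$, the length of each canonical path, and hence $\ell(f)$, is $O(n)$. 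The smallest positive transition probability of the $k$-switch chain between distinct states is $\theta\ge (k\binom{|E(G)|}{2k})^{-1}$, so $1/\theta$ is polynomial in $n$. By the uniform-stationary recipe in the preliminaries, it then suffices to exhibit a polynomial $b$ such that the number $N(e)$ of ordered pairs whose canonical path traverses a given directed edge $e=(X,Y)$ satisfies $N(e)\le b\cdot|\mathcal{F}_G|$; this will imply $\rho(f)\ell(f)=\mathrm{poly}(n)$ and hence rapid mixing.

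To bound $N(e)$ I use a standard encoding argument. Fix $e=(X,Y)$, and for each pair $(F_1,F_2)$ routing flow through $e$, consider $L_0:=E(F_1)\triangle E(F_2)\triangle E(X)$ regarded as a subgraph of $G$. Since $F_1,F_2,X$ are all 2-regular, every vertex of $L_0$ has even degree, and in fact degree in $\{0,2,4\}$, with the degree-$4$ vertices localised to the single alternating circuit of $F_1\triangle F_2$ being processed at step $X\to Y$ of the canonical path. Because each switch has size at most $k$ and the canonical path from the proof of Proposition~\ref{pr:2-factor} processes alternating circuits of $F_1\triangle F_2$ in a prescribed order, $L_0$ can be repaired to a genuine 2-factor $L\in\mathcal{F}_G$ by modifying only $O(k)=O(1)$ edges. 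Accompany $L$ with a ``hint'' $h=(R,t,c)$ consisting of (i) the list $R$ of repair edges, (ii) the index $t\in\{1,\ldots,q\}$ of the current transition, and (iii) structural data $c$ identifying which alternating circuit of $F_1\triangle F_2$ is currently being processed. Then $h$ lies in a set $\mathcal{S}$ of size polynomial in $n$.

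The map $(F_1,F_2)\mapsto (L,h)$ is injective for fixed $e$: from $L$ and $R$ one recovers $L_0$ and hence $E(F_1)\triangle E(F_2)=L_0\triangle E(X)$; from $X$ together with $t$ and $c$ one reads off which circuits of $F_1\triangle F_2$ have already been replaced by their $F_2$-versions (and so contribute their $F_2$-edges to $X$) and which still carry their $F_1$-versions, which together pin down both $F_1$ and $F_2$. This yields $N(e)\le|\mathcal{S}|\cdot|\mathcal{F}_G|=\mathrm{poly}(n)\cdot|\mathcal{F}_G|$, completing the argument. The main technical obstacle will be fixing a canonical path inside the proof of Proposition~\ref{pr:2-factor} for which the ``currently processed circuit'' and the repair edges admit a short explicit description; this calls for a deterministic processing order on the circuits of $F_1\triangle F_2$ (for instance lexicographically by smallest vertex) and a locally determined choice of reconnection operation inside each circuit, so that the hint remains of polynomial size.
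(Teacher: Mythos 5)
Your overall scaffolding (Sinclair's method, linear path length from $\phi(x)=x$, polynomially bounded $1/\theta$) is fine, but the heart of the proof --- the congestion bound via an encoding argument --- has a genuine gap, and it is precisely the gap the paper's actual proof is designed to avoid. Your encoding requires that for an intermediate state $X$ on the canonical path from $F_1$ to $F_2$, the set $L_0=E(F_1)\triangle E(F_2)\triangle E(X)$ is within $O(1)$ edges of a $2$-factor, and that from $X$ plus a polynomial-size hint one can tell which edges of $X$ came from $F_1$ and which from $F_2$. Both properties hinge on the intermediate states satisfying $E(X)\subseteq E(F_1)\cup E(F_2)$ up to $O(1)$ ``extra'' edges. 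Neither the abstract hypothesis of strong $k$-switch irreducibility nor the concrete paths produced by Proposition~\ref{pr:2-factor} give you this: the $4$-switch in that proof deletes an edge $bc$ of $F_1$ and inserts edges $ba_1$, $a_6c$ chosen only via the degree condition, so they generally lie outside $E(F_1)\cup E(F_2)$, and these stray edges accumulate along the path. After $\Theta(n)$ such steps, $L_0$ can be $\Theta(n)$ edges away from any $2$-factor, the ``repair set'' $R$ is no longer of constant (or even polynomially enumerable) size, and injectivity of $(F_1,F_2)\mapsto(L,h)$ fails. The sentence in which you defer this to ``fixing a canonical path \dots\ with a locally determined choice of reconnection'' is not a technicality: building switch-decompositions of alternating circuits that stay inside $F_1\cup F_2$ is exactly what is done for the switch chain on $K_n$, but in a non-complete dense graph $G$ the required chords need not exist, which is why the paper remarks that the approaches of \cite{Cooper2007,Erdos2013} seem much harder to adapt here.

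The paper instead routes around this entirely: it takes the known efficient flow for the Jerrum--Sinclair chain on the enlarged state space $\mathcal{F}'_G$ of almost-$2$-factors (whose single-edge moves do admit the fine control your encoding needs, following \cite{AK2019}), contracts each almost-$2$-factor onto a nearby genuine $2$-factor using strong stability ($k_{JS}=3$ for $\delta(G)\ge n/2$), and then uses strong $k$-switch irreducibility only \emph{locally}: the resulting infeasible edges join $2$-factors $F,F'$ with $|F\triangle F'|\le 12$, so each is replaced by a detour of at most $\phi(12)=O(1)$ switches, which inflates congestion by only a polynomial factor. If you want to keep your direct canonical-path architecture, you would need to prove a much stronger structural version of irreducibility (paths whose intermediate states stay within $F_1\triangle F_2$ plus $O(1)$ edges, with a deterministic, locally decodable processing order); otherwise you should invoke the JS-chain flow and the embedding as the paper does.
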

Moreover, Theorem \ref{thm:2_mixing} remains true for the bipartite case of the problem, where we are given a bipartite graph $G = (A \cup B, E)$ with both $|A| = |B|= n$, and where every vertex in $A \cup B$ has degree at least $n/2$.

The proof of Theorem~\ref{thm:2_mixing} is outlined in Appendix \ref{app:2_mixing}.
It is based on the embedding argument introduced in  \cite{AK2019} for the switch Markov chain that samples graphs with a given degree sequence. It is perhaps interesting to note that it seems much harder to prove Theorem \ref{thm:2_mixing} by using other approaches for that problem, such as \cite{Cooper2007,Erdos2013}. These approaches do have the advantage that they get better mixing time bounds than those in \cite{AK2019}.

\subsection{Dense monotone graphs}
\label{sec:DMG}
In this section we will describe a rapid mixing result for sampling Hamiltonian cycles from dense monotone graphs that is based on Theorem \ref{thm:2_mixing}. We will start with the definition of monotone graphs (also known as bipartite permutation graphs).

\begin{definition}[Monotone graph]
\label{def:monotone}
A bipartite graph $G = (A\cup B, E)$, with $|A| = |B| = n$, is \emph{monotone} if there exists a permutation $(a_1,\dots,a_n)$ of the vertices in $A$ and a permutation $(b_1,\dots,b_n)$  of the vertices in $B$, such that the adjacency matrix $C$ of $G$, with rows indexed by $a_1,\dots,a_n$ and columns indexed by $b_1,\dots,b_n$, has \emph{monotone rows and columns}. 
This means that for each $i$, there exists $1 \leq r_i \leq t_i \leq n$ such that $C(a_i, b_j) =1$ if and only if  $r_i \leq j \leq t_i$ and the sequences $(r_i)_{i=1}^n$ and $(t_i)_{i=1}^n$ are non-decreasing. Intuitively, this means that the $1$-entries in every row and column are contiguous.
 Note that although the definition does not immediately  appear to be symmetric in $A$ and $B$, one can easily check that it is. 
An example of such an adjacency matrix of a monotone graph is
$$
C = \left(\begin{matrix}
1 & 1 & 1 & 0 & 0 & 0\\
1 & 1 & 1 & 1 & 0 & 0\\
0 & 1 & 1 & 1 & 1 & 0\\
0 & 0 & 1 & 1 & 1 & 0\\
0 & 0 & 1 & 1 & 1 & 1\\
0 & 0 & 1 & 1 & 1 & 1
\end{matrix}\right).
$$
Moreover, we say $G$ is a $\gamma$-dense monotone graph if every vertex in $A \cup B$ has degree at least $\gamma n$.
\end{definition}

The main theorem of this section is given below.
\begin{theorem}\label{thm:monotone}
Let $\mathcal{D}$ be the set of all monotone graphs with $\delta(G) \geq n/2$.\footnote{Remember that the total number of nodes in $G$ is $2n$ in the bipartite case.} If
$\mathcal{D}$ is strongly $k$-switch irreducible for Hamiltonian cycles for some $k \in \mathbb{N}$ (this is the case for $k = 10$ by Remark~\ref{rem:bipartite})
 then for every $G \in \mathcal{D}$, the $k$-switch Markov chain for sampling a Hamiltonian cycle from $\mathcal{H}_G$ is rapidly mixing. 
\end{theorem}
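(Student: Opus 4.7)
\medskip\noindent\textbf{Proof plan.}
The plan is to construct an efficient multicommodity flow for the $k$-switch Markov chain on $\mathcal{H}_G$ by projecting the 2-factor flow supplied by Theorem~\ref{thm:2_mixing} and then repairing the projected paths using the hypothesized strong $k$-switch irreducibility. Any monotone graph with $\delta(G)\geq n/2$ is bipartite with both parts of size $n$ and minimum degree at least $n/2$, so the bipartite version of Theorem~\ref{thm:2_mixing}, combined with Proposition~\ref{pr:2-factor}, provides an efficient multicommodity flow $f_{\mathcal{F}}$ for the $k'$-switch Markov chain on $\mathcal{F}_G$ for some constant $k'\leq k$; in particular $f_{\mathcal{F}}$ has polynomial path length and polynomially bounded load per edge.

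First I would fix a retraction $\sigma\colon \mathcal{F}_G \to \mathcal{H}_G$ with $\sigma(H)=H$ for every $H\in \mathcal{H}_G$; on a non-Hamiltonian 2-factor $F$, define $\sigma(F)$ by applying Lemma~\ref{lem:reconnect} to $F$ relative to a fixed auxiliary Hamiltonian cycle, so that $|E(F)\triangle E(\sigma(F))|=O(n)$ and, more importantly, $|E(\sigma(G))\triangle E(\sigma(G'))|$ is bounded by a constant whenever $G$ and $G'$ differ by a $k'$-switch. Next, for each pair $(H_1,H_2)\in \mathcal{H}_G \times \mathcal{H}_G$ and each pair $(F_1,F_2)\in \sigma^{-1}(H_1)\times \sigma^{-1}(H_2)$, I would take every flow-carrying path $F_1=G_0,G_1,\dots,G_m=F_2$ of $f_{\mathcal{F}}$, project it to the sequence $\sigma(G_0),\sigma(G_1),\dots,\sigma(G_m)$ in $\mathcal{H}_G$, and splice into each consecutive pair $\sigma(G_i),\sigma(G_{i+1})$ a short path of $k$-switches supplied by strong $k$-switch irreducibility; the preceding bound on symmetric differences guarantees that each splice has length at most $\phi(O(1))$. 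Rescaling the resulting raw flow by $|\mathcal{F}_G|^2 / (|\sigma^{-1}(H_1)|\cdot|\sigma^{-1}(H_2)|\cdot|\mathcal{H}_G|^2)$ turns it into a valid multicommodity flow $f_{\mathcal{H}}$ for the uniform distribution on $\mathcal{H}_G$.

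The remaining analysis would be largely standard. The path length of $f_{\mathcal{H}}$ is at most the length of an $f_{\mathcal{F}}$-path times the splice length, both polynomial. The load of $f_{\mathcal{H}}$ on an edge $(H,H')$ of the Hamiltonian state-space graph is bounded by summing the contributions of 2-factor edges that project (after splicing) onto $(H,H')$; this picks up factors of $\phi(O(1))$ for splice multiplicity, $\max_H|\sigma^{-1}(H)|$ from the preimage pairings, and $|\mathcal{F}_G|/|\mathcal{H}_G|$ from the demand rescaling. As long as these quantities are polynomial in $n$, the load of $f_{\mathcal{H}}$ remains polynomially bounded in $1/|\mathcal{H}_G|^2$, and Sinclair's bound then yields rapid mixing.

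I expect the main obstacle to be controlling the combinatorial quantities $|\mathcal{F}_G|/|\mathcal{H}_G|$ and $\max_H|\sigma^{-1}(H)|$ for dense monotone graphs; this step is independent of the Markov chain machinery and is where Definition~\ref{def:monotone} must really be used. The contiguity of the ones in each row and column of the bipartite adjacency matrix severely restricts the possible cycle decompositions of a 2-factor of $G$, and I would expect to bound both quantities by a careful analysis of the interval patterns that can appear on the rows/columns traversed by each cycle. Making this combinatorial bound work under the tight density hypothesis $\delta(G)\geq n/2$, while also ensuring $\sigma$ has bounded preimages, is likely to be the most technical part of the argument.
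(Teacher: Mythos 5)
Your high-level architecture --- embedding the efficient $2$-factor flow from Theorem~\ref{thm:2_mixing} into the Hamiltonian-cycle chain via a projection $\mathcal{F}_G \to \mathcal{H}_G$ with polynomially bounded preimages that maps switch-adjacent $2$-factors to Hamiltonian cycles at bounded symmetric difference --- is exactly the paper's strategy. The gap is in the construction of the projection. You define $\sigma$ by running Lemma~\ref{lem:reconnect} on $F$ relative to a fixed auxiliary Hamiltonian cycle, but neither of the two properties you need then holds, and you assert rather than prove the crucial one. First, the reconnecting procedure performs up to $t-1$ switches, where $t$ is the number of components of $F$ and can be $\Theta(n)$, and each switch involves choices (which $H$-edge $vw$ joins two components, which $y \in X \cap N(b)$ is selected) that depend on the global cycle structure of the current $2$-factor. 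Two $2$-factors differing by a single $k'$-switch can cause the procedure to diverge after its first step, so there is no constant bound on $|\sigma(F)\triangle\sigma(F')|$; this Lipschitz-type property is precisely the hard part of the whole proof and cannot be extracted from Lemma~\ref{lem:reconnect} as stated. Second, since $|F\triangle\sigma(F)|$ can be $\Theta(n)$, you cannot bound $|\sigma^{-1}(H)|$ by counting small edge subsets, and you explicitly defer this bound to an unspecified analysis of interval patterns. Both essential properties of the embedding map are therefore missing, and the quantity $|\mathcal{F}_G|/|\mathcal{H}_G|$ that you flag as the main obstacle is not actually what the argument needs.

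The paper resolves exactly these two points by making the projection canonical and local, using the monotone structure directly. Claim~\ref{cl:1} splits $G$ into two complete bipartite halves $G[A_1\cup B_1]$ and $G[A_2\cup B_2]$. Claim~\ref{cl:2} then defines an \emph{injective} map $\phi_1$ from $2$-factors to systems of three vertex-disjoint spanning paths by cutting each cycle at its highest-ordered vertex and gluing the resulting paths together in a fixed order determined by that vertex (in the style of Feder et al.); monotonicity guarantees the gluing edges exist, injectivity holds because the constituent paths can be uniquely recovered from the glued paths, and canonicity gives $|\phi_1(F)\triangle\phi_1(F')|\le 3k$ whenever $|F\triangle F'|\le k$. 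Finally, Claim~\ref{cl:3} (via Lemma~\ref{lem:recon}) closes the three paths into a Hamiltonian cycle changing at most $9$ edges, so $|\phi^{-1}(H)|\le |E(G)|^{9}$. To salvage your plan you would have to replace $\sigma$ by a map of this locally-defined kind; a reconnection procedure driven by a global induction will not supply either the Lipschitz bound or the preimage bound.
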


As mentioned earlier, the set of all Hamiltonian cycles for (not necessarily dense) monotone graphs is connected under switches of size two \cite{Takaoka2018} in the weak sense as defined in the preliminaries. Takaoka shows that every Hamiltonian cycle can be transformed into a `canonical' Hamiltonian cycle using switches of size two. This is, however, not enough for the argument we will give below. For that we need the strong sense of irreducibility.

\begin{proof}[Proof of Theorem \ref{thm:monotone}]
The proof relies on an embedding argument similar to that in \cite{Feder2006}, but technically somewhat different.
While the argument in  \cite{Feder2006}  corresponds to the case where $G$ is a complete bipartite graph (which is indeed monotone), here we relax the argument so that it extends to monotone graphs.

 Let $G \in \mathcal{D}$ be given. In particular, our goal is to show, for every $G \in \mathcal{D}$, the existence of a function 
$
\phi: \mathcal{F}_G \rightarrow \mathcal{H}_G 
$
with the properties
\begin{enumerate}[i)]
\item $\phi^{-1}(H) \leq \text{poly}(n)$ for every $H \in \mathcal{H}_G$, and,
\item there exists a function $f : \N \rightarrow \N$ such that whenever $F,F' \in \mathcal{F}_G$ with $|F \triangle F'| \leq k$, we have $|\phi(F)\triangle\phi(F')| \leq f(k)$.
\end{enumerate}
If such a function exists, one can argue exactly as in \cite{Feder2006} that every efficient multi-commodity flow for the $k$-switch Markov chain on the set of all $2$-factors $\mathcal{F}_G$ can be transformed into an efficient multi-commodity flow for the $k$-switch Markov chain on the set of all Hamiltonian cycles $\mathcal{H}_G$.\footnote{In \cite{Feder2006}, it is shown that any efficient flow for the $2$-switch Markov chain for sampling subgraphs of $K_n$ with a given degree sequence can be turned into an efficient flow for the $2$-switch Markov chain for sampling \emph{connected} graphs with a given degree sequence.} (The embedding argument from \cite{Feder2006} that we refer to here is essentially the same as that used to prove Theorem \ref{thm:2_mixing} in Appendix \ref{app:2_mixing}.) As we know that there exists an efficient multi-commodity flow for the $k$-switch Markov chain 
(by Theorem \ref{thm:2_mixing} and Proposition~\ref{pr:2-factor}), 
this then shows that the $k$-switch Markov chain on $\mathcal{H}_G$ is also rapidly mixing. \\

The remainder of the proof is dedicated to showing the existence of such a function $\phi$ for each $G \in \mathcal{D}$, which we will do in three claims. Let $G = (A \cup B, E) \in \mathcal{D}$ be a monotone graph with $|A|=|B|=n$ where we assume that $n$ is even for simplicity.\footnote{When $n$ is odd, one can work with $\lceil n/2 \rceil$ instead of $n/2$ throughout the proof.} Let $a_1, \ldots, a_n$ (resp.\ $b_1, \ldots, b_n$) be the vertices of $A$ (resp.\ $B$) in order as given in Definition~\ref{def:monotone}.  Set $A_1 = \{a_1, \ldots, a_{n/2} \}$ with $A_2 = A \setminus A_1$ and $B_1 = \{b_1, \ldots, b_{n/2}\}$ with $B_2 = B \setminus B_1$.

\begin{claim}
\label{cl:1}
With the setup above, the graphs $G[A_1 \cup B_1]$ and $G[A_2 \cup B_2]$ are complete bipartite. 
\end{claim}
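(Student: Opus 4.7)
The plan is to exploit the structure of the adjacency matrix $C$ directly: since the $1$-entries in each row and each column are contiguous and the intervals shift monotonically, the minimum-degree hypothesis should force the ``ends'' of the matrix (the top-left $(n/2) \times (n/2)$ and bottom-right $(n/2) \times (n/2)$ submatrices) to be all-ones.

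Concretely, I would first argue that $r_i = 1$ for every $i \le n/2$. The neighbours of $b_1$ in $A$ are exactly those $a_i$ with $r_i = 1$, and since $(r_i)$ is non-decreasing this set of indices forms a prefix $\{1, \ldots, s\}$. The degree condition $d(b_1) \ge n/2$ then gives $s \ge n/2$, so $r_i = 1$ for all $a_i \in A_1$. Next I would use the degree condition at $a_1$: its degree is $t_1 - r_1 + 1 = t_1$, so $t_1 \ge n/2$, and monotonicity of $(t_i)$ upgrades this to $t_i \ge n/2$ for every $i$. Combining these two facts, for any $a_i \in A_1$ and $b_j \in B_1$ we have $r_i = 1 \le j \le n/2 \le t_i$, hence $a_ib_j \in E$, showing $G[A_1 \cup B_1]$ is complete bipartite.

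For $G[A_2 \cup B_2]$ I would run the symmetric argument on the other corner of the matrix: considering $b_n$ and using monotonicity of $(t_i)$ in reverse (the neighbours of $b_n$ in $A$ are exactly the $a_i$ with $t_i = n$, which form a suffix) gives $t_i = n$ for all $i \ge n/2+1$; considering $a_n$ and using monotonicity of $(r_i)$ gives $r_i \le n/2+1$ for all $i$. Together these imply that every $a_i \in A_2$ is joined to every $b_j \in B_2$.

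There is no real obstacle here; the claim is essentially a direct unpacking of the monotonicity of the row- and column-interval endpoints together with the boundary degree conditions at $a_1, b_1, a_n, b_n$. The only thing to be a bit careful about is to invoke monotonicity in the correct direction (using a boundary vertex in $B_1$ to control the $r_i$'s and a boundary vertex in $A_1$ to control the $t_i$'s, and mirroring for the $A_2, B_2$ case), so that each inequality actually transfers to the full range of indices in the relevant half.
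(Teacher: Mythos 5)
Your proof is correct and follows essentially the same route as the paper's: the paper also starts from the observation that $a_1b_1$ (resp.\ $a_nb_n$) must be an edge and then uses the degree bounds at these boundary vertices together with monotonicity to fill in the whole $(n/2)\times(n/2)$ corner blocks. Your version merely makes the interval endpoints $r_i, t_i$ explicit where the paper argues informally.
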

  
\begin{claim}
\label{cl:2}
Given $G \in \mathcal{D}$, let $\mathcal{P}_G$ be the set of all subgraphs  $K \subseteq G$ such that $K$ is the union of three vertex-disjoint paths that together cover all vertices of $G$. Then there exists an injective function $\phi_1: \mathcal{F}_G \rightarrow \mathcal{P}_G$ and a function $g:\mathbb{N} \rightarrow \mathbb{N}$ such that whenever $F,F' \in \mathcal{F}_G$ with $|F \triangle F'| \leq k$, we have $|\phi_1(F) \triangle \phi_1(F')| \leq g(k)$.
\end{claim}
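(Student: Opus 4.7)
The plan is to construct $\phi_1(F)$ by canonically removing (and sometimes adding) a constant-dependent number of edges so that the result has exactly three path components and covers all vertices. Counting edges, $F$ has $2n$ edges while any element of $\mathcal{P}_G$ has $2n-3$ edges, so the net effect must be to remove exactly three more edges than we add.

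First I would fix the vertex ordering of $V(G)$ from Definition~\ref{def:monotone}, extend it to a lex-order on $E(G)$, and use it to make canonical selections. Let $F$ have cyclic components $C_1,\dots,C_c$, indexed so that the lex-smallest vertex of $C_i$ precedes that of $C_{i+1}$. The construction branches on $c$. If $c = 3$, I would remove the lex-smallest edge from each cycle to obtain three spanning paths. If $c \leq 2$, I would split cycles further by removing additional non-adjacent canonical edges, ensuring exactly three path components. If $c \geq 4$, I would remove the lex-smallest edge from each cycle (yielding $c$ paths) and then canonically merge the surplus by chaining the paths from $C_4,\dots,C_c$ onto, say, the path from $C_1$. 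The canonical cut in each cycle will be chosen so that at least one resulting path endpoint lies in $A_1\cup B_1$ (or $A_2\cup B_2$), and then the complete bipartite structure granted by Claim~\ref{cl:1} guarantees that every required bridging edge exists in $G$.

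For injectivity, given $K = \phi_1(F)$ I would recover $F$ by identifying the bridging edges (recognisable from their canonical positions in $A_1\cup B_1$ or $A_2\cup B_2$ relative to the path endpoints of $K$) and reinserting the canonically deleted edges, both uniquely determined by $K$ and the branch rule. For the Lipschitz property, a symmetric difference of size at most $k$ between $F$ and $F'$ involves at most $k$ edges and therefore alters at most $O(k)$ cycle components of the $2$-factor; canonical cut edges and merging choices within untouched cycles remain identical in $\phi_1(F)$ and $\phi_1(F')$, so $|\phi_1(F) \triangle \phi_1(F')| \leq g(k)$ for some $g(k) = O(k)$.

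The main obstacle will be making the canonical choices simultaneously \emph{local} (so the Lipschitz bound holds) and \emph{invertible} (so $\phi_1$ is injective). Global lex-based selections threaten to cascade under a single switch, so I would instead make every canonical choice depend only on the cycle in which it sits, using the fixed complete bipartite parts $A_1\cup B_1$ and $A_2\cup B_2$ as a stable frame of reference. Careful bookkeeping across the four cases $c=1$, $c=2$, $c=3$, and $c\geq 4$ will then be needed to verify that the case boundaries do not cause collisions in the image of $\phi_1$.
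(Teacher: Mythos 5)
Your high-level plan (cut each cycle of $F$ once, glue the resulting paths into three covering paths, invert by recognising the cut/glue edges, and bound the symmetric difference by locality) is the same as the paper's, but the step you delegate to Claim~\ref{cl:1} is exactly where the proof lives, and as stated your version of it fails. Claim~\ref{cl:1} only gives completeness of $G[A_1\cup B_1]$ and $G[A_2\cup B_2]$; it says nothing about edges between $A_1$ and $B_2$ or between $A_2$ and $B_1$. Two cycles of $F$ can lie in ``incomparable'' regions of the monotone order --- for instance one cycle contained in $A_1\cup B_1$ and another in $A_2\cup B_2$, where a vertex such as $a_1$ may have \emph{no} neighbour in $B_2$ at all --- so the bridging edge needed to chain the surplus path from $C_j$ onto the path from $C_1$ simply need not exist in $G$. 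Arranging ``at least one endpoint in $A_1\cup B_1$'' does not help: for the bridge to be guaranteed by Claim~\ref{cl:1} you would need an endpoint of \emph{each} of the two paths being joined to lie in the \emph{same} complete bipartite part, which you cannot ensure for an arbitrary pair of cycles.

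The paper's resolution is to never glue across regions. It orders $A$ and $B$ starting from $A_2$ (resp.\ $B_2$), assigns each cycle $C_r$ its highest $A$-vertex $a^r$ and highest $B$-vertex $b^r$, and partitions the cycles into three groups $Q_{A_1}$ ($a^r\in A_1$), $Q_{B_1}$ ($a^r\in A_2$, $b^r\in B_1$) and $Q_{A_2\cup B_2}$ (the rest, which are then entirely inside $A_2\cup B_2$). Each cycle is cut at its highest vertex, and paths are concatenated only \emph{within} a group, in increasing order of that highest vertex. For $Q_{A_2\cup B_2}$ the gluing edges come from Claim~\ref{cl:1}; for $Q_{A_1}$ and $Q_{B_1}$ they require a separate two-case monotonicity argument (the case where the cut neighbour $c^r$ lands in $B_2$ uses that $N(a^{r+1})$ is an interval containing $b_j$ for $a^{r+1}=a_j$, hence contains $c^r$). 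This also dissolves your case analysis on the number of cycles $c$ and the attendant worries about injectivity across case boundaries: the image always consists of three paths, one per group (the paper tacitly allows a group to contribute a degenerate path), recovery proceeds by walking each glued path and cutting whenever a new record-high vertex appears, and a switch of size $k$ touches at most $k$ cycles and hence at most $k$ cut edges and $2k$ gluing edges, giving $g(k)=3k$. Without the three-group partition and the monotonicity argument for the gluing edges, your construction does not go through.
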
  
  
\begin{claim}
\label{cl:3}
Given $G \in \mathcal{D}$, there is a function $\phi_2: \mathcal{P}_G \rightarrow \mathcal{H}_G$ such that for 
every $K \in \mathcal{P}_G$, we have that $|K \triangle \phi(K)| \leq 9$; in particular, for 
each $H \in \mathcal{H}_G$, $|\phi_2^{-1}(H)| \leq |E(G)|^9 = {\rm poly}(n)$. 
\end{claim}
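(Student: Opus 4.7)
The plan is to construct $\phi_2$ by performing a small number of local modifications that close $K$ into a Hamiltonian cycle. First, I would observe that $K = P_1 \cup P_2 \cup P_3$ has six endpoints in total, three in $A$ and three in $B$. This follows from a parity argument: each path contributes $+1$, $-1$, or $0$ to the difference between the number of $A$-endpoints and $B$-endpoints depending on whether its endpoints lie both in $A$, both in $B$, or one in each part, and the total imbalance is zero because $|A|=|B|$.

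The strategy then exploits the complete bipartite subgraphs $G[A_1 \cup B_1]$ and $G[A_2 \cup B_2]$ provided by Claim~\ref{cl:1}, together with a \emph{path rotation} operation: given a path $v_0 v_1 \cdots v_k$ in $K$ and a chord $v_0 v_i \in E(G)$ with $2 \le i \le k$, one replaces this path with $v_{i-1} v_{i-2} \cdots v_1 v_0 v_i v_{i+1} \cdots v_k$, which has the same vertex set but a new endpoint $v_{i-1}$ in place of $v_0$. Each rotation removes the edge $v_{i-1} v_i$ and adds $v_0 v_i$, so it contributes exactly $2$ to the symmetric difference with $K$; moreover, since $G$ is bipartite, the shifted endpoint remains in the same part as the original. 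Performing at most one rotation per path (hence at most three in total), I would bring the six endpoints into a \emph{favorable configuration}: one in which the three $A$-endpoints and three $B$-endpoints can be paired by three edges of $G$ to form a \emph{single} Hamiltonian cycle rather than two or three disjoint cycles. Using $\delta(G) \ge n/2$ together with the contiguous-neighbourhood structure of Definition~\ref{def:monotone}, one argues that the rotations can be chosen to move each misplaced endpoint into a region where Claim~\ref{cl:1} guarantees the required bridging edges exist. A brief case analysis on the reduced multigraph obtained by contracting each path to a single vertex shows that, of the six possible bijections between the three $A$-endpoints and the three $B$-endpoints, at least two yield a single cycle (a triangle in the reduced multigraph); one selects such a matching canonically.

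Setting $\phi_2(K)$ to be the Hamiltonian cycle produced by this procedure gives $|K \triangle \phi_2(K)| \le 3 \cdot 2 + 3 = 9$, where the $6$ accounts for up to three rotations and the $3$ for the bridging edges. For the preimage bound, note that $K \in \phi_2^{-1}(H)$ forces $|E(K) \triangle E(H)| \le 9$, so $K$ is uniquely determined by specifying at most nine edges of $E(G)$; hence $|\phi_2^{-1}(H)| \le |E(G)|^9 = \mathrm{poly}(n)$.

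The main obstacle will lie in verifying the rotation step in all cases: one must carry out a case analysis based on the distribution of endpoints among $A_1, A_2, B_1, B_2$ and exploit the monotone adjacency structure to guarantee that a suitable chord $v_0 v_i$ is always available on each path requiring rotation. Degenerate situations, such as paths that are too short to admit a useful rotation (including the case of a single-vertex path) or configurations that would seem to require crossing edges between $A_1$ and $B_2$, will also need separate attention, though in each such case the paths tend to already be close to a favorable configuration.
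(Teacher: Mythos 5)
Your overall strategy (rotate endpoints first, then close everything up by adding three bridging edges at once) is genuinely different from the paper's, and as written it has a real gap precisely at the step you yourself flag as ``the main obstacle''. The difficulty is not a routine verification: you need, simultaneously, (a) that one rotation per path suffices to move each endpoint into a position from which a bridging edge of $G$ is available, (b) that the three bridging edges you end up using realise one of the ``triangle'' bijections in the reduced multigraph rather than a loop or a double edge, and (c) that this still works for degenerate paths (a single-vertex path admits no rotation at all, yet must receive two bridging edges to two specific endpoints of the other paths). Conditions (a) and (b) interact: the set of configurations reachable by one rotation per path is constrained by the adjacency structure of $G$, and you give no argument that among the reachable configurations there is one admitting a triangle all of whose edges lie in $G$. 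Since this is exactly where all the work of the claim lies, the proposal is a plan rather than a proof. (Your parity observation that there are three endpoints in each part, and the preimage count at the end, are both fine.)

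For comparison, the paper deduces Claim~\ref{cl:3} from Lemma~\ref{lem:recon}, which avoids any global case analysis by merging the paths one pair at a time. To merge two paths, either an endpoint of one is adjacent to an endpoint of another (one edge modification), or one picks $z \in N(x_1)^- \cap N(y_2)$ --- nonempty by the degree condition --- and reroutes along $z$, costing at most three edge modifications and reducing the number of paths by one; the final Hamiltonian path is closed into a cycle by the same rotation trick for three more modifications, giving $3k = 9$ for $k=3$. Note that this argument uses only $\delta(G) \geq n/2$ and never invokes the monotone structure or Claim~\ref{cl:1}; your reliance on the complete bipartite blocks $G[A_1 \cup B_1]$ and $G[A_2 \cup B_2]$ is a sign that you are taking on more case analysis than the claim requires. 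If you want to salvage your approach, the cleanest fix is to abandon the ``all three bridges at once'' step and adopt the greedy pairwise merging, which keeps the same $2+1$ cost per merge that you already budgeted.
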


The function $\phi$ is the composition of $\phi_1$ and $\phi_2$ and can easily be seen to satisfy the desired properties (taking $f(k) = g(k) + 18$). Therefore it remains only to prove the claims. 

\begin{proof}[Proof of Claim~\ref{cl:1}]
Note that $a_1b_1$ must be an edge of $G$. If this is not the case, then $b_1$ can never have positive degree, because of monotonicity of the rows of the adjacency matrix. As both $a_1$ and $b_1$ have degree at least $n/2$, we can conclude that all edges of the form $a_ib_j$ with $1 \leq i,j \leq n/2$ are present (again because of monotonicity) so $G[A_1 \cup B_1]$ is complete bipartite. A similar argument holds for the edge $a_nb_n$ that yields $G[A_2 \cup B_2]$ is complete bipartite.
\end{proof}

\begin{proof}[Proof of Claim~\ref{cl:2}]
We use a similar idea as in \cite{Feder2006}.  We fix the total orderings 
$$
a_{\frac{n}{2}+1} < a_{\frac{n}{2}+2} < \dots < a_n < a_1 < a_2 < \dots <a_{\frac{n}{2}}
$$ on the vertices in $A$ and 
$$
b_{\frac{n}{2}+1} < b_{\frac{n}{2}+2} < \dots < b_n < b_1 < b_2 < \dots <b_{\frac{n}{2}}
$$ 
on the vertices of $B$.

Fix $F \in \mathcal{F}_G$ and let $C_1,\dots,C_q$ be the cycles (or connected components) of $F$. For a given cycle $C_r$, we use $a^r$ to denote the highest ordered vertex of $A$ in $C_r$, and we use $b^r$ to denote the highest ordered vertex of $B$ in $C_r$. We first group the cycles in three sets depending on the vertices $a^r$ and $b^r$. We define
$$
Q_{A_1} = \{C_r:  a^r \in A_1\}, \ \ \ \ \ \ Q_{B_1} = \{C_r : a^r \in A_2 \text{ and }  b^r \in B_1 \}
$$
and $Q_{A_2 \cup B_2}$ as the set of all remaining cycles not in $Q_{A_1}$ or $Q_{B_1}$. Note that the cycles in $Q_{A_2 \cup B_2}$ are fully contained in $A_2 \cup B_2$. For each  cycle $C^r$ in $Q_{A_1}$ and $Q_{A_2 \cup B_2}$, let $c^r$ be an arbitrary neighbour of $a^r$ in $C^r$ and for each cycle $C^r$ in $Q_{B_1}$ let $d^r$ be an arbitrary neighbour of $b^r$ on $C^r$ (in each case there are two choices). We delete the edges $a^rc^r$ and $b^rd^r$ from $F$ to create paths; we will connect the paths in each group together to build the three paths which will define $\phi_1(F) \in \mathcal{P}_G$.

We first explain the idea (of Feder et al.\ \cite{Feder2006}) on how to glue together the paths from $Q_{A_2 \cup B_2}$ in such a way that we can uniquely recover the original paths from the single glued path: this case is easiest because we know from Claim~\ref{cl:1} that the graph $G[A_2 \cup B_2]$ is complete bipartite.

After renaming the cycles, let us assume the cycles in $Q_{A_2 \cup B_2}$ are $C^1, \ldots, C^q$ where $a^1 < a^2 \dots < a^q$.  Let $P_r$ be the path obtained by 
deleting the edge $a^rc^r$ from the cycle $C_r$. As all the cycles lie entirely within $A_2 \cup B_2$ and $G[A_2 \cup B_2]$ is complete bipartite, we know that all the edges $c^ra^{r+1}$ are present in $G$ for $r = 1,\dots,q-1$. Adding 
these edges to the graph consisting of $P_1,\dots,P_q$, results in a path that we call $P_{A_2 \cup B_2}$. 
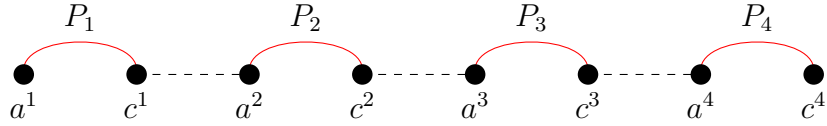
\begin{figure}[h!]
\centering
\begin{tikzpicture}[scale=1.5]
\coordinate (A1) at (0,0); 
\coordinate (A2) at (1,0);
\coordinate (M1) at (2,0);
\coordinate (M2) at (3,0);
\coordinate (N1) at (4,0);
\coordinate (N2) at (5,0);
\coordinate (T1) at (6,0);
\coordinate (T2) at (7,0);

\path[every node/.style={sloped,anchor=south,auto=false}]
(A1) edge[-,red,bend left=100] node {\textcolor{black}{$P_1$}} (A2)
(A2) edge[-,dashed] node {} (M1)
(M1) edge[-,red,bend left=100] node {\textcolor{black}{$P_2$}} (M2)
(M2) edge[-,dashed] node {} (N1)
(N1) edge[-,red,bend left=100] node {\textcolor{black}{$P_3$}} (N2)
(N2) edge[-,dashed] node {} (T1)
(T1) edge[-,red,bend left=100] node {\textcolor{black}{$P_4$}} (T2);  

\node at (A1) [circle,scale=0.7,fill=black] {};
\node (a1) [below=0.1cm of A1]  {$a^1$};
\node at (A2) [circle,scale=0.7,fill=black] {};
\node (a2) [below=0.1cm of A2]  {$c^1$};
\node at (M1) [circle,scale=0.7,fill=black] {};
\node (m1) [below=0.1cm of M1]  {$a^2$};
\node at (M2) [circle,scale=0.7,fill=black] {};
\node (m2) [below=0.1cm of M2]  {$c^2$};
\node at (N1) [circle,scale=0.7,fill=black] {};
\node (n1) [below=0.1cm of N1]  {$a^3$};
\node at (N2) [circle,scale=0.7,fill=black] {};
\node (n2) [below=0.1cm of N2]  {$c^3$};
\node at (T1) [circle,scale=0.7,fill=black] {};
\node (t1) [below=0.1cm of T1]  {$a^4$};
\node at (T2) [circle,scale=0.7,fill=black] {};
\node (t2) [below=0.1cm of T2]  {$c^4$};

\end{tikzpicture}
\caption{Sketch of path $P$ from the paths $P_1,\dots,P_q$ for the case $q = 4$.}
\label{fig:case2}
\end{figure} 

Note that, given $P_{A_2 \cup B_2}$, (without knowing the paths $P_1, \ldots, P_q$), we can uniquely recover these $P_1,\dots,P_q$ as follows. We know that the endpoint of $P_{A_2 \cup B_2}$ that is contained in $A$ is the first vertex of $P_1$, i.e., the vertex $a^1$ (the other endpoint is necessarily in $B$). In order to recover $P_1$ we start following the path $P_{A_2 \cup B_2}$, starting from $a^1$, until we reach the first vertex in $A$ that is ordered higher than $a^1$; this is the first vertex of $P_2$, i.e., the vertex $a^2$. 
Continuing in this fashion we can uniquely recover all the paths $P_i$.

We apply a similar procedure to the paths obtained from $Q_{A_1}$ and $Q_{B_1}$ to form paths $P_{A_1}$ and $P_{B_1}$, respectively. The problem here is that the underlying graph is  not complete bipartite so we do not apriori know if the edges to `glue' the paths together are all present: we  argue that they are in fact present. 
The proof for $Q_{A_1}$ that we will give below also holds for $Q_{B_1}$ by symmetry of monotonicity (the case of $Q_{B_1}$ is essentially a slightly more restrictive setting in which some of the cases below cannot occur).

Assume that the cycles in $Q_{A_1}$ are $C_1,\dots,C_p$ labelled so that $a^1 < a^2 < \dots < a^p$. By means of a case distinction, depending on whether $c^r \in B_1$ or $c^r \in B_2$ for $r = 1,\dots,p-1$, we will show that the edges $c^ra^{r+1}$ always exist. 

\textit{Case 1: $c^r \in B_1$.} As we know that $a^{r+1} \in A_1$, by definition of $Q_{A_1}$ it follows that $c^ra^{r+1}$ is in $G$, since $G[A_1 \cup B_1]$ is complete bipartite by Claim~\ref{cl:1}.

\textit{Case 2: $c^r \in B_2$.}  Since $a^r < a^{r+1} =: a_j$ by assumption, monotonicity tells us that the neighbourhood $N(a^{r+1}) \subseteq B$ ends at either $c^r$ or to the right of $c^r$. 
Furthermore, we know $a_jb_j \in E(G)$, again since $G[A_1 \cup B_1]$ is bipartite by Claim~\ref{cl:1}. Since $b_j \in B_1$, it lies to the left of $c^r \in B_2$ so, in particular, the neighbourhood $N(a^{r+1})$ starts before $c^r$. Monotonicity then tells us that the edge $c^ra^{r+1}$ is also present in $G$.

We have shown how to construct the paths $P_{A_1}$, $P_{B_1}$, and $P_{A_2 \cup B_2}$, which together clearly cover all vertices of $G$. We define $\phi_1(F) = P_{A_1} \cup P_{B_1} \cup P_{A_2 \cup B_2} \in \mathcal{P}_G$.

In order to see that $\phi_1$ is injective, note first that if $K \in \mathcal{P}_G$ is the image of some (unknown ) $F \in \mathcal{F}_G$ under $\phi_1$, then one of the paths in $K$ has all its vertices in $A_2 \cup B_2$ (we call this path $P_{A_2 \cup B_2}$), one has all its vertices from $A$ in $A_2$ and some vertices from $B_1$ (we call this path $P_{B_1}$), and we call the remaining path $P_{A_1}$. As described earlier, we can then easily identify the constituent paths that were glued together to form $P_{A_1}$, $P_{B_1}$, and $P_{A_2 \cup B_2}$.
Finally we can complete each constituent path to a cycle to uniquely recover $F$. Therefore $\phi_1$ is injective.

Finally, suppose $F, F' \in \mathcal{F}_G$ with $|F \triangle F'| \leq k$. In particular, there are at most $k$ cycles that belong to  one of $F$ or $F'$ but not both.
  In constructing $\phi_1(F)$ (resp.\ $\phi_1(F')$), we first delete one edge from each cycle of $F$ (resp.\ $F'$) to obtain a union of paths, which we call $J$ (resp.\ $J'$). Then $|J \triangle J'| \leq k$ and there are at most $k$ paths that belong to one of $J$ or $J'$ but not both. When gluing paths of $J$ (resp.\ $J'$) together to form $\phi_1(F)$ (resp.\ $\phi_1(F')$) there are at most $2k$ gluing edges that are used for one of $J$ or $J'$ but not both (at most two such edges for each differing path).  This shows that $|\phi_1(F) \triangle \phi_1(F')| \leq k + 2k = 3k$, showing $\phi_1$ has the desired property (taking $g(k)=3k$).
\end{proof}

\begin{proof}[Proof of Claim~\ref{cl:3}]
This claim follows immediately from Lemma~\ref{lem:recon} below.
\end{proof}

\begin{lemma}
\label{lem:recon}
Suppose $G=(V,E)$ is an $n$-vertex graph with $\delta(G) >  n/2$. If $P_1, \ldots, P_k$ are $k$ vertex-disjoint paths in $G$ that together cover all vertices $V$, then there exists a Hamiltonian cycle $H$ of $G$ such that $E(H) \triangle E(P_1 \cup \cdots \cup P_k) \leq 3k$. 

For bipartite graphs, we have the following. Suppose $G=(V,E)$ is a bipartite graph with bipartition $V = A \cup B$ with $|A|=|B|=n$ and $\delta(G) \geq n/2$. If $P_1, \ldots, P_k$ are $k$ vertex-disjoint paths in $G$ that together cover all vertices $V$, then there exists a Hamiltonian cycle $H$ of $G$ such that $E(H) \triangle E(P_1 \cup \cdots \cup P_k) \leq 3k$.
\end{lemma}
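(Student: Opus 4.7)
\noindent The plan is to prove the lemma by induction on $k$: I would show that a collection of $k \geq 2$ vertex-disjoint paths covering $V$ can be transformed into $k-1$ such paths using at most three edge modifications. Iterating this step reduces us to a single Hamilton path, after which three more edits close it into a Hamilton cycle, giving by the triangle inequality for symmetric difference
\[
|E(H) \triangle E(P_1 \cup \cdots \cup P_k)| \leq 3(k-1) + 3 = 3k.
\]

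For the base case $k=1$, given a Hamilton path $P_1 = v_1 \cdots v_n$, I would use the standard Dirac/P\'osa rotation: since $d(v_1) + d(v_n) > n$, some index $i$ satisfies $v_1 v_i, v_{i-1} v_n \in E(G)$, so deleting $v_{i-1} v_i$ and adding $v_1 v_i$ and $v_{i-1} v_n$ produces a Hamilton cycle at a cost of three edits.

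For the inductive step I would pick two paths $P_1, P_2$ with endpoints $\{a, a'\}$ and $\{b, b'\}$. The cheap case is that some pair $(x,y) \in \{a,a'\} \times \{b,b'\}$ is an edge of $G$: adding $xy$ merges the two paths at a cost of one edit. Otherwise, the degree condition gives $|N(a) \cap N(b)| \geq 2\delta(G) - (n-2) \geq 3$, and the ``no-edge'' hypothesis places every common neighbour in $V \setminus \{a, a', b, b'\}$. Picking such a common neighbour $c$ on some path $P_j$: if $c$ is an endpoint of $P_j$ (so $j \geq 3$), I delete the unique path-edge $cc^+$ of $P_j$ incident with $c$ and add the two edges $ac$ and $cb$, which in three edits replaces $P_1, P_2, P_j$ by the merged path joining $P_1$ to $P_2$ through $c$ together with the sub-path $P_j - c$, dropping the path count by one.

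The hard part will be the case in which every common neighbour $c$ of $a,b$ is an \emph{internal} vertex of some path, since then isolating $c$ na\"ively costs two deletions and blows the budget to four. To handle this I would argue that one of the following is always available: (a)\,some common neighbour $c$ internal to a path $P_j$ has at least one of $ac^+, bc^-, ac^-, bc^+$ also in $E(G)$, enabling a three-edit reroute that splits $P_j$ only once (for instance, delete $cc^+$ and add $ac, bc^+$, which produces the two merged paths $P_1 \cup \{ac\} \cup P_j^L$ and $P_2 \cup \{bc^+\} \cup P_j^R$, where $P_j^L, P_j^R$ are the two halves of $P_j$); or (b)\,a short P\'osa-style rotation of $P_j$, $P_1$, or $P_2$ promotes an internal vertex into a usable endpoint within the three-edit budget. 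A careful case analysis exploiting the several common neighbours of $a, b$ and the degree bounds on $c^\pm$ should make at least one option always work. The bipartite case runs identically once $a, b$ are chosen in the same part so that $|N(a) \cap N(b)| \geq 3$, exactly as in the footnote arguments accompanying Theorem~\ref{thm:main_irr}.
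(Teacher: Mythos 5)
Your overall architecture (induct on the number of paths, three edits per merge, three more to close the Hamilton path, base case by the standard rotation) matches the paper's, and the base case is fine. But the inductive step has a genuine gap exactly where you flag "the hard part": you never establish that when all four endpoint pairs are non-adjacent, one of your options (a) or (b) is actually available. Your counting only gives $|N(a)\cap N(b)|\geq 3$ common neighbours, all possibly internal, and three internal common neighbours are nowhere near enough to force one of $ac^+, bc^-, ac^-, bc^+$ to be an edge; nor do you give any argument for option (b). As written, the proof does not go through.

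The fix — and the paper's actual argument — is to abandon common neighbours altogether and intersect a \emph{shifted} neighbourhood with an ordinary one. Orient each $P_i$ from $x_i$ to $y_i$ and first check whether $x_1$ is adjacent to some other start-vertex $x_i$ (if so, merge with one edit). Otherwise every neighbour of $x_1$ has a well-defined predecessor, so $|N(x_1)^-| = |N(x_1)| > n/2$, and hence $N(x_1)^- \cap N(y_2) \neq \emptyset$. Any $z$ in this intersection gives a valid three-edit move: delete $zz^+$ and add $x_1z^+$ and $zy_2$. Whether $z$ lies on $P_1$, $P_2$ or a third path $P_i$, this merges paths (reducing the count by one) because $z$ and $z^+$ end up in different pieces of the split path — precisely the reroute you sketch in option (a), but with the existence of $z$ guaranteed by the degree count rather than hoped for among common neighbours. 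Your own example move "delete $cc^+$, add $ac$, $bc^+$" only uses $c\in N(a)$ and $c^+\in N(b)$, i.e.\ $c \in N(a)\cap N(b)^-$; had you run the counting on that intersection instead of $N(a)\cap N(b)$, the hard case would have disappeared. The same shift resolves the bipartite case once $P_2$ is chosen so that $x_1$ and $y_2$ lie in different parts (making $N(x_1)^-$ and $N(y_2)$ subsets of the same part).
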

We prove the lemma for graphs; an almost identical proof works for bipartite graphs and we indicate where the proofs differ.
\begin{proof}
We will inductively modify the system of paths, at each step modifying at most $3$ edges and reducing the number of paths by $1$.

Let $x_i$ and $y_i$ be the endpoints of $P_i$ and orient the path $P_i$ from $x_i$ to $y_i$. For any vertex $x$, let $x^+$ (resp.\ $x^-$) be the successor (resp.\ predecessor) of $x$ on its path (note that these exist except possibly at the $2k$ endpoints of the paths). For any set $S \subseteq V(G)$, we define $S^+:=\{x^+: s \in S \}$.

Assuming $k \geq 2$, take any two paths, say $P_1$ and $P_2$. [If $G$ is bipartite, we choose $P_2$ s.t. $x_1$ and $y_2$ are in different parts, say $x_1\in A$ and $x_2\in B$.  Note that this is always possible, renaming paths if necessary.] 
If $x_1$ is adjacent to any of $x_2, \ldots, x_k$, say to $x_i$, then we can reduce the number of paths by replacing $P_1$ and $P_i$ by $y_1P_1x_1x_iP_iy_i$ as required (only modifying one edge) and we continue. Therefore we may assume that $x_1$ is not adjacent to any of $x_2, \ldots, x_k$, and in particular, $|N(x_1)^-| = |N(x_1)| > n/2$. Then since $|N(y_2)| > n/2$, we must have that $N(x_1)^- \cap N(y_2)$ is non-empty. [Note that for $G$ bipartite $N(x_1)^-,N(y_2)\subseteq A$ and therefore $N(x_1)^- \cap N(y_2)$ also holds.] Let $z \in N(x_1)^- \cap N(y_2)$ and assume $z \in V(P_i)$ for some $i = 1, \ldots, k$. If $i \not = 1,2$ then we can replace $P_1, P_2, P_i$ with the two paths $y_1P_1x_1z^+P_iy_i$ and $x_iP_izy_2P_2x_2$, which together cover all the vertices of $V(P_1) \cup V(P_2) \cup V(P_i)$ (see Figure~\ref{fig:path2hc} (a)). If $i=1$, we replace $P_1, P_2$ with the path $y_1P_1z^+x_1P_1zy_2P_2x_2$ (see Figure~\ref{fig:path2hc} (b)) and if $i=2$, we replace  $P_1, P_2$ with $y_1P_1x_1z^+P_2y_2zP_2x_2$. In all three of these cases, we delete one edge and add two (i.e.\ we modify three edges) and reduce the number of paths by $1$. 

\begin{figure}[ht!]
	\begin{center}
		\includegraphics[width=0.9\linewidth]{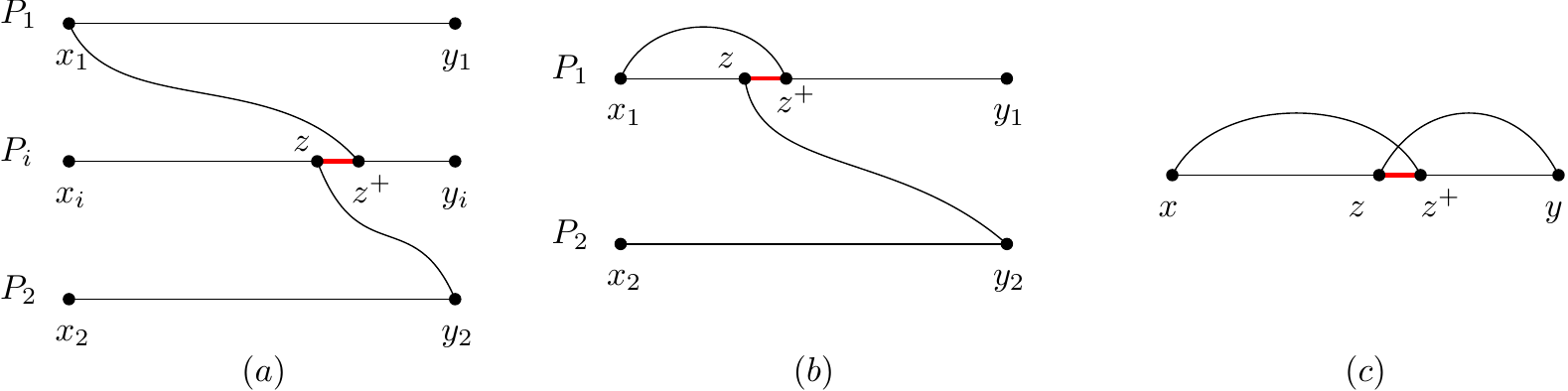}
		\caption{(a) and (b): Reducing the number of paths, cases $i\neq 1,2$ and $i=1$. Case $i=2$ is similar. (c): completing the Hamiltonian cycle. In all cases, the thick, red edge is removed, and the curvy edges are introduced.}
		\label{fig:path2hc}
	\end{center}
\end{figure}

By iterating this, we obtain a Hamiltonian path $P$ by modifying at most $3(k-1)$ edges. We can then complete this to a Hamiltonian cycle in the standard way. Let $x$ and $y$ be the endpoints of $P$ and pick $z \in N(x)^- \cap N(y)$ (which exists as before since $|N(x)^-|, |N(y)| > n/2$). Then we obtain a Hamiltonian cycle $H = xPzyPz^+x$ (see Figure~\ref{fig:path2hc} (c)), where again we have added two edges and removed one. [In the case of $G$ being bipartite, $P$ has its endpoints in different parts, so that again $N(x)^-$, $N(y)$ are subsets of the same part, so again $N(x)^- \cap N(y) \not= \emptyset$.]
\end{proof}

This completes the proof of the three claims and hence of the theorem.
\end{proof}

\subsection{Remarks regarding the density assumption}
It is perhaps interesting to note, in general, it is necessary to make some kind of assumption on the minimum degree of the monotone graph  for the argument in the proof of Theorem \ref{thm:monotone} to work. Without it, it is not necessarily true that the number of $2$-factors is at most a polynomial factor larger than the number of Hamiltonian cycles of a given graph $G$. See the matrix below for an indication of the family of instances that should achieve this claim. 
$$
\left(\begin{matrix}
1 & 1 & 0 & 0 & 0 & 0 \\
1 & 1 & 1 & 0 & 0 & 0 \\
1 & 1 & 1 & 1 & 0 & 0 \\
1 & 1 & 1 & 1 & 1 & 0 \\
1 & 1 & 1 & 1 & 1 & 1 \\
1 & 1 & 1 & 1 & 1 & 1 
\end{matrix}\right)
$$
We next explain why this claim is true. Let the rows be indexed by $A = (a_1,\dots,a_n)$ and the columns by $B = (b_1,\dots,b_n)$. As $a_1$ only has two neighbours, any Hamiltonian cycle must contain the edges $a_1b_1$ and $a_1b_2$. This is indicated in the matrix below.
$$
\left(\begin{matrix}
\mathbf{\underline{1}} & \mathbf{\underline{1}} & 0 & 0 & 0 & 0 \\
1 & 1 & 1 & 0 & 0 & 0 \\
1 & 1 & 1 & 1 & 0 & 0 \\
1 & 1 & 1 & 1 & 1 & 0 \\
1 & 1 & 1 & 1 & 1 & 1 \\
1 & 1 & 1 & 1 & 1 & 1 
\end{matrix}\right)
$$
Now, the vertex $a_2$ cannot also have neighbours $b_1$ and $b_2$, as this creates a cycle of length four. So we have $N(a_2) = \{b_1,b_3\}$ or $N(a_2) = \{b_2,b_3\}$; see the matrices below. 
$$
\left(\begin{matrix}
\mathbf{\underline{1}} & \mathbf{\underline{1}} & 0 & 0 & 0 & 0 \\
\mathbf{\underline{1}}  & 1 & \mathbf{\underline{1}}  & 0 & 0 & 0 \\
1 & 1 & 1 & 1 & 0 & 0 \\
1 & 1 & 1 & 1 & 1 & 0 \\
1 & 1 & 1 & 1 & 1 & 1 \\
1 & 1 & 1 & 1 & 1 & 1 
\end{matrix}\right) \quad \text{ or } \quad
\left(\begin{matrix}
\mathbf{\underline{1}} & \mathbf{\underline{1}} & 0 & 0 & 0 & 0 \\
1 & \mathbf{\underline{1}}  & \mathbf{\underline{1}}  & 0 & 0 & 0 \\
1 & 1 & 1 & 1 & 0 & 0 \\
1 & 1 & 1 & 1 & 1 & 0 \\
1 & 1 & 1 & 1 & 1 & 1 \\
1 & 1 & 1 & 1 & 1 & 1 
\end{matrix}\right)
$$
Note that in both the matrices above, there is now one vertex in $B$ that has two neighbours already (and therefore cannot be chosen as neighbour in any later step). By repeating this argument, one can show that for every row $i = 2,\dots,n-1$ there are two possible choices of extending the current Hamiltonian path, and so the number of Hamiltonian cycles equals $2^{n-2}$.

However, the number of $2$-factors in at least $(n/4)!$. To see this, first note that this is a lower bound on the number of Hamiltonian cycles in the (complete) subgraph induced by the vertices $\{a_{3n/4+1},\dots,a_n\}$ and $\{b_1,\dots,b_{n/4}\}$ (assuming that $n$ is divisible by four). It is not hard to see that any Hamiltonian cycle on this induced subgraph can be extended to a $2$-factor of the original bipartite graph.\footnote{One can give a sharper bound here than $(n/4)!$, but this is not needed for our purposes.} \\

Nevertheless, we believe that our result can be generalized to monotone graphs with minimum degree $\gamma n$ for any $\gamma  \in (0,1)$. However, this comes at the expense of many more technicalities that (in our opinion) do not offer any additional insights. Remember that in Claim \ref{cl:1}, we show that the nodes of $G$ can be partitioned into two complete bipartite graphs whenever $\gamma \geq 1/2$. More generally, for a given $\gamma \in (0,1)$, it should be possible to partition the nodes of $G$ into a constant $c = c(\gamma)$ number of complete bipartite graphs. The analogue of Claim \ref{cl:2} would then be to show that all cycles in a given $2$-factor can be broken up, and glued together again, into a constant $d(\gamma)$ number of (vertex-disjoint) paths, after which one would need to argue that the resulting collection of paths is close, in terms of symmetric difference, to a Hamiltonian cycle in the monotone graph.  

 \bibliographystyle{abbrv}
 \bibliography{references}

\newpage
\appendix
\section{Rapid mixing of switch-based chains for sampling $2$-factors in dense graphs}\label{app:2_mixing}
\emph{This section is a modification of certain parts in \cite{AK2019}.}\\

\noindent We will tailor all definitions to the notion of $2$-factors for sake of readibility.
Let $\mathbf{2}= (2,2,\dots,2)$ be the all-twos sequence of length $n$. Let $G$ be a given (dense) undirected graph $G$ and let $\mathcal{F}_G$ be the set of all $2$-factors of $G$.

We write $G(d')$ for the set of all subgraphs of $G$ 
with degree sequence $d'$.
Let $\mathcal{F}'_G= \cup_{d'} G(d')$ with $d'$ ranging over the set 
$$\left\{d'  : d'_j \leq 2 \text{ for all } j \text{, and } \sum_{i=1}^n |2 - d_i'| \leq 2\right\}.
$$
In other words, $\mathcal{F}'_G$ is the set of almost $2$-factors, that is, subgraphs of $G$ with degree sequence $d'$ where (i) $d' = \mathbf{2}$, or (ii) there exist distinct $\kappa,\lambda$ such that
$d'_i = 1$ if $i \in \{\kappa,\lambda\}$ and $d'_i = 2$ otherwise,
or (iii) there exists a $\kappa$ so that
$d'_i = 0$ if $i = \kappa$ and $d'_i = 2$ otherwise. 
In the case (ii) we say that $d'$ has two vertices with degree deficit one, and in the case (iii) we say that $d'$ has one vertex with degree deficit two. 

A family $\mathcal{D}$ of graphs $G$ is called \emph{$P$-stable} \cite{Jerrum1990} if there exists a polynomial $q(n)$ such that for all $G \in \mathcal{D}$ 
we have $|\mathcal{F}_G'|/|\mathcal{F}_G| \leq q(n)$  where $n$ is the number of vertices of $G$.\\ 

\noindent Jerrum and Sinclair \cite{Jerrum1990} define a Markov chain that, tailored to $2$-factors, works as follows.\medskip

\setlength{\fboxsep}{5pt}
\noindent \fbox{\parbox{\textwidth-12pt}{Let $F \in \mathcal{F}'_G$ be the current $2$-factor of the JS chain. Choose an ordered pair of vertices $(i,j)$ uniformly at random:
\begin{enumerate}
	\item if $F \in \mathcal{F}_G$ and $(i,j)$ is an edge of $F$, delete $(i,j)$ from $G$ (\emph{Type 0 transition}),
	\item if $F \notin \mathcal{F}_G$ and the degree of $i$ in $G$ is less than $2$, and $(i,j)$ is not an edge of $F$, add $(i,j)$ to $F$ if this edge is in $G$; if this causes the degree of $j$ to exceed $2$, select an edge $(j,k)$ uniformly at random from $F$ and delete it (\emph{Type 1 transition}).
	\end{enumerate}

In case the degree of $j$ does not exceed $2$ in the second case, we call this a \emph{Type 2 transition}.}}
\\

\bigskip

The graphs $F,F' \in \mathcal{F}'_G$ are  
\emph{JS adjacent} if $F$ can be obtained from $F'$ with positive probability in one transition of the JS chain and note this relation is symmetric. The properties of the JS chain, stated in Theorem \ref{thm:js_properties} below, are easy to check \cite{Jerrum1990}.
\begin{theorem} \label{thm:js_properties}
	The JS chain on $\mathcal{F}'_G$ is irreducible, aperiodic and symmetric, and, hence, has uniform stationary distribution over $\mathcal{F}'_G$. Moreover, $P(F,F')^{-1} \leq 2n^3$ for all JS adjacent $F,F' \in \mathcal{F}'_G$, and also 
	the maximum in- and out-degrees of the state space graph of the JS chain are bounded by $n^3$.
\end{theorem}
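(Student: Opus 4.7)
The plan is to verify each assertion separately, with symmetry being the key observation that drives most of the theorem.

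First, I would establish symmetry by a case analysis on the three transition types. A Type 0 transition (deleting an edge $(i,j)$ from a $2$-factor $F$, producing $F'$ with degree deficit one at both $i$ and $j$) is reversed precisely by a Type 2 transition from $F'$ (picking the ordered pair $(i,j)$, noting $i$ has degree deficit in $F'$, $(i,j) \notin F'$, $(i,j) \in G$, and adding it without exceeding degree $2$ at $j$); both transitions are produced by selecting the same ordered pair with probability $1/n^2$, so the probabilities match. A Type 1 transition (selecting $(i,j)$ with $i$ deficient, adding $(i,j)$, then deleting a uniformly chosen incident edge $(j,k)$ of $F$ at $j$) is reversed by a Type 1 transition from the resulting state (selecting $(k,j)$, adding it, and deleting $(i,j)$); the factor $1/2$ from the uniform choice of the edge at $j$ appears on both sides, so again the probabilities match. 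This verifies $P(F,F') = P(F',F)$ for all JS-adjacent pairs.

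Given symmetry, stationarity of the uniform distribution on $\mathcal{F}'_G$ is immediate (since each column of the doubly stochastic matrix sums to $1$). Aperiodicity follows from the existence of self-loops: for any state $F$, most ordered pairs $(i,j)$ will fail every condition of Types 0--2 (e.g.\ for $F \in \mathcal{F}_G$ and $(i,j) \notin E(F)$ no transition applies), leaving positive probability mass at $F$.

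For irreducibility I would show that any two states in $\mathcal{F}'_G$ can be joined. First, from any deficient state one can reach some $2$-factor by applying Type 1 transitions to successively repair degree deficits, exploiting the fact that $\mathcal{F}_G$ is nonempty in the dense regime (Dirac). Second, given $F, F'' \in \mathcal{F}_G$, decompose $F \triangle F''$ into alternating cycles; for each such cycle, delete one $F$-edge via a Type 0 transition, then ``walk around'' the cycle via Type 1 transitions, each one swapping an $F$-edge for an $F''$-edge while shifting the pair of deficient vertices, and finally close with a Type 2 transition. Concatenating these sequences yields a JS-path from $F$ to $F''$.

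Finally, the quantitative bounds follow by routine counting. Each JS-adjacent pair arises from a choice of ordered pair (probability $1/n^2$) and at most a factor $1/2$ coming from the deletion in Type 1, giving $P(F,F') \geq 1/(2n^2) \geq 1/(2n^3)$. For the out-degree of the state space graph: from a given $F$, every outgoing edge is produced by choosing one of $n^2$ ordered pairs together with at most a constant number of auxiliary choices, yielding at most $O(n^2) \leq n^3$ neighbours; the in-degree bound follows immediately from symmetry.

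I expect the main obstacle to be the bookkeeping in the symmetry and irreducibility arguments, in particular correctly pairing each transition with its reverse and checking that alternating-cycle decompositions of $F \triangle F''$ can be realised one cycle at a time via the three transition types. None of these steps is deep, however, and the overall structure mirrors the standard analysis of Jerrum--Sinclair chains for matchings.
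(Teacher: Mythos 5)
The paper does not actually prove this theorem; it states that the properties "are easy to check" and defers to \cite{Jerrum1990}. Your verification is the standard one and is essentially correct: the transition-type case analysis for symmetry is right (note only that an unordered Type~0 / Type~2 move is realised by \emph{two} ordered pairs $(i,j)$ and $(j,i)$, whereas a Type~1 move is realised by only one, but since the same multiplicity appears for a move and its reverse, symmetry is unaffected), and the quantitative bounds $P(F,F')\geq 1/(2n^2)$ and out-degree at most $2n^2$ are in fact stronger than the $n^3$ bounds claimed. The one place where you should be more careful is irreducibility from the deficient states: unlike the Jerrum--Sinclair matching chain, here you cannot delete your way down to a trivial state because $\mathcal{F}'_G$ only admits total deficit $2$, so the claim that Type~1 moves "successively repair degree deficits" genuinely requires the minimum-degree hypothesis $\delta(G)\geq n/2$ (this is exactly the strong-stability statement $k_{JS}(G)\leq 3$ used later in the appendix, and the paper's footnote constructs the repairing switch explicitly via $z\in N(x)\cap N(y)^+$); for arbitrary graphs the statement as you argue it would not go through. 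Since the theorem is only ever invoked in the dense regime, this is a matter of making the hypothesis explicit rather than a flaw in the argument.
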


We say that two graphs $F, F' \in \mathcal{F}'_G$ are \emph{within distance $r$ in the JS chain} if there exists a path of length  at most $r$ from $F$ to $F'$ in the state space graph of the JS chain. By $\text{dist}(F',\mathbf{2})$ we denote the  minimum distance of $F' \in \mathcal{F}'_G$ to an element in $\mathcal{F}$.
The following parameter will play a central role in this work.  Let 
\begin{equation}\label{eq:distance}
k_{JS}(G) = \max_{F' \in \mathcal{F}'_G} \text{dist}(F',\mathbf{2}) \,.
\end{equation}
Based on the parameter $k_{JS}$, we define the notion of \emph{strong stability} \cite{AK2019}. 

\begin{definition}[Strong stability]\label{def:strong_stable}
	A family of graphs $\mathcal{D}$ is called \emph{strongly stable} if there exists a constant $\ell$ such that $k_{JS}(G) \leq \ell$ for all $G \in \mathcal{D}$.
\end{definition}

It is shown by Jerrum and Sinclair \cite{Jerrum1990}, that if $\mathcal{D}$ is the set of all graphs $G$ with $\delta(G) \geq n/2$, then $\mathcal{D}$ is strongly stable for $\ell = 3$. (This gives rise to the condition on the minimum degree in the statement of Theorem \ref{thm:2_mixing}.) \\

\noindent We now have all the ingredients for the proof of Theorem \ref{thm:2_mixing}. It uses essentially the same argument as that in \cite{AK2019}, where it is shown that the switch Markov chain for sampling graphs with given degrees is rapidly mixing for certain strongly stable classes of degree sequence, i.e., for the notion of strong stability in that setting.

\begin{proof}[Proof of Theorem \ref{thm:2_mixing}] 
The high-level idea is to use an embedding argument which states that an efficient multi-commodity flow for the JS chain can be transformed into an efficient flow for the $k$-switch Markov chain.\\ 

\noindent The fact that there exists an efficient multi-commodity flow for the JS chain can be shown using exactly the same arguments as in Theorem 3.2 in \cite{AK2019}.\footnote{That theorem essentially shows the result in the case where the graph $G$ is complete and strong irreducibility for $k  = 2$, but the analysis remains true when $G$ is not a complete graph, and when $k > 2$ (still assuming the notion of strong stability of  the given class of degree sequences).}

Without going into all the details, we will give a sketch of this argument. Recall that Sinclair's multi-commodity flow method asks us to define a flow $f$ in the state space graph of the JS chain that routes a fraction $\pi(X)\pi(Y)$ of flow from $X$ to $Y$ for every $X, Y \in \mathcal{F}_G'$.  Here,  
$$
\pi(Z) = \frac{1}{|\mathcal{F}_G'|}
$$ 
for every $Z \in \mathcal{F}_G'$. 

The notion of strong stability allows us to take a shortcut here: Instead of defining a flow between every two states in $\mathcal{F}_G'$, one can first define a flow between any two $2$-factors $F, F' \in \mathcal{F}_G$. Then, roughly speaking, in order to define a flow between any two states in $\mathcal{F}_G'$, we use the fact that every `almost $2$-factor' $X \in \mathcal{F}_G' \setminus \mathcal{F}_G$ is close to some actual $2$-factor in the state space graph, because of strong stability. These short paths between states in  $\mathcal{F}_G' \setminus \mathcal{F}_G$ and $\mathcal{F}_G$ can be exploited to define the desired flow between any two states in $\mathcal{F}_G'$.

In order to define the flow between two $2$-factors $F$ and $F'$, we decompose the symmetric difference $F \triangle F'$ into a collection of alternating circuits.\footnote{To be more precise, the flow is spread out over all possible ways in which the symmetric difference can be decomposed.} We then use the operations defining the JS chain in order to transform $F$ into $F'$ by `flipping'  edges on an alternating circuit in order to move from $F$ to $F'$ (see Figures 4--6 in \cite{AK2019} for an example). In particular, all these flow-carrying paths will have polynomial length. Morever, all these operations only use edges in $F \triangle F'$ and so the approach taken in the proof of Theorem 3.2 in \cite{AK2019} can be used here as well (when $G$ is not a complete graph). In order to prove that this procedure indeed yields an efficient flow, one can use the exact same arguments as in \cite{AK2019}. 

In particular, we can obtain the following statement similar to Theorem 3.2 in \cite{AK2019}.

\begin{lemma}\label{lem:js_flow}
Let $\mathcal{D}$ be the collection of graphs with $\delta(G) \geq n/2$. Then
there exist polynomials $p(n)$ and $r(n)$ such that for any $G \in \mathcal{D}$ there exists an efficient multi-commodity flow $f$ for the JS chain on $\mathcal{F}_G'$ satisfying
$$
\max_e f(e) \leq p(n) \ \text{ and } \ \ell(f) \leq q(n).
$$
where $f(e)$ is the total amount of flow routed over edge $e$ in the state space graph, and $\ell(f)$ the maximum length of a flow-carrying path.
\end{lemma}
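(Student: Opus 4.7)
The plan is to build the multi-commodity flow $f$ in two stages, closely following the strategy of \cite{AK2019}: first route flow between pairs of genuine $2$-factors $F, F' \in \mathcal{F}_G$ using canonical paths in the JS chain state space graph, then use the strong stability ($k_{JS}(G) \leq 3$ for $\delta(G) \geq n/2$) to extend the construction to all pairs in $\mathcal{F}_G' \times \mathcal{F}_G'$.

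For the first stage, fix $F, F' \in \mathcal{F}_G$. Since both are $2$-regular, the symmetric difference $F \triangle F'$ decomposes into vertex-disjoint alternating circuits. Rather than committing to one decomposition, I would spread the $\pi(F)\pi(F')$ units of flow evenly across all valid decompositions (or, equivalently, over all pairings that produce such circuits), which is what keeps the encoding injective enough later. Given a chosen decomposition $C_1,\dots,C_s$ ordered by some canonical rule (e.g.\ lexicographically by smallest vertex label), the canonical path processes the circuits one at a time: to flip $C_i$, open it with a Type $0$ transition (deleting an $F$-edge, landing in $\mathcal{F}_G'$ with two deficit-one vertices), propagate around $C_i$ with Type $1$ transitions alternately adding $F'$-edges and removing $F$-edges, and finally close with a Type $2$ transition that restores a $2$-factor. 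This gives a flow-carrying path of length $O(|F \triangle F'|) = O(n)$, so $\ell(f) \leq q(n)$ for some polynomial $q$.

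For the second stage, given $X, Y \in \mathcal{F}_G'$, use strong stability to choose actual $2$-factors $F_X, F_Y \in \mathcal{F}_G$ within distance $3$ of $X$ and $Y$ respectively in the state space graph; again one spreads flow over all such choices. The path from $X$ to $Y$ is then: $X \to F_X$ (at most $3$ steps) $\to F_Y$ (via the canonical paths above) $\to Y$ (at most $3$ steps). Since $|\mathcal{F}_G'|/|\mathcal{F}_G|$ is polynomially bounded by $P$-stability (which follows from strong stability via the standard counting argument of \cite{Jerrum1990}), the weight each $(X,Y)$-pair assigns to any intermediate $(F_X, F_Y)$-pair is only polynomially perturbed from $\pi(F_X)\pi(F_Y)$, so bounds proved for the inner flow transfer to the outer flow.

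The main obstacle, as in \cite{AK2019}, is bounding $\max_e f(e)$ by a polynomial. For this I would use the encoding argument: for each oriented edge $e = (Z, Z')$ in the state space graph and each pair $(F, F')$ whose canonical path uses $e$, define an encoding $L(e, F, F') \in \mathcal{F}_G'$ by essentially taking $F \triangle F' \triangle (\text{current processed portion})$, possibly adjusted at the boundary transition to remain inside $\mathcal{F}_G'$. The key lemma to prove is that $(F, F')$ can be reconstructed from $(e, L(e,F,F'))$ up to polynomially many choices (capturing the circuit decomposition, the orientation, the active circuit index, and the position within it). Combined with $|\mathcal{F}_G'| \cdot \max_e f(e) \cdot \theta \leq |\mathcal{F}_G'|^2$-style accounting, this delivers $\max_e f(e) \leq p(n)/|\mathcal{F}_G'|$ for some polynomial $p$, which is exactly the efficient-flow bound. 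The extension to the $\mathcal{F}_G' \setminus \mathcal{F}_G$ endpoints uses Theorem \ref{thm:js_properties} to bound the number of length-$\leq 3$ prefixes/suffixes through any given edge by $n^{O(1)}$, preserving the polynomial bound. The arguments are essentially identical to \cite{AK2019}, except that the underlying host graph is $G$ rather than $K_n$; since every operation used in the canonical path only touches edges of $F \triangle F' \subseteq G$, the proof of \cite{AK2019} carries over verbatim, which is the content of the sketch already given in the theorem's proof.
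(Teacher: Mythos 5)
Your proposal follows the same route as the paper: canonical paths between genuine $2$-factors via alternating-circuit decompositions of $F \triangle F'$ traversed with Type~0/1/2 JS transitions, extension to all of $\mathcal{F}'_G$ via strong stability ($k_{JS} \leq 3$), congestion control by the encoding argument of \cite{AK2019}, and the observation that the argument transfers from $K_n$ to $G$ because the canonical paths only use edges of $F \triangle F' \subseteq E(G)$. The paper itself only sketches this and defers the details to Theorem~3.2 of \cite{AK2019}; your write-up is a correct and slightly more explicit rendering of the same argument.
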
\medskip

\noindent The next step entails transforming the flow $f$ in Lemma \ref{lem:js_flow} into an efficient multi-commodity flow for the $k$-switch Markov chain (assuming strong irreducibility). First note that the flow $f$ above is a flow between any two states in $\mathcal{F}_G'$, whereas we are interested in defining a flow, let us call it $g$, between any two states in $\mathcal{F}_G$. Therefore, the first step will be to restrict ourselves to the flow routed in $f$ between states in $\mathcal{F}_G$, which we call $\tilde{f}$. 

A subtlety here is that we route a flow of $1/|\mathcal{F}_G'|^2$ between any two states in $\mathcal{F}_G$ in $\tilde{f}$ (and also $f$), whereas we need to route $1/|\mathcal{F}_G|^2$ between two such states in the desired (final) flow $g$. This is not a problem as replacing $|\mathcal{F}_G'|$ by $|\mathcal{F}_G|$ in the definition of $\tilde{f}$ only blows up the congestion on a given edge $e$ by at most a polynomial factor, using the fact that
$$
\frac{|\mathcal{F}_G'|}{|\mathcal{F}_G|} \leq s(n)
$$
for some polynomial $s$, since $\gamma \geq 1/2$.\footnote{Given $F \in \mathcal{F}'_G \setminus \mathcal{F}_G$, let $x,y$ be vertices of degree $1$ or $x=y$ the vertex of degree $0$. Find $z \in N(x) \cap N(y)^+$ and replace $zz^-$  with $xz$, $yz^-$ to obtain $\sigma(F) \in \mathcal{F}_G$ with $|F \triangle \sigma(F)| \leq 3$. Thus $|\sigma^{-1}(F)| \leq n^3 =: s(n)$.} Let us call the resulting (intermediate) flow $\bar{f}$, which now routes  a fraction $1/|\mathcal{F}_G|^2$ of flow  between any two states in $\mathcal{F}_G$ in the JS chain, and that has polynomially bounded congestion.\footnote{The flows $\tilde{f}$ and $\bar{f}$ are not efficient multi-commodity flows for Markov chains, but `auxiliary flows'.}\\

\noindent We next continue with transforming the flow $\bar{f}$ into the desired flow $g$ (again similar to the ideas in \cite{AK2019}). We do this by a sequence of reductions. 

 We first identify for every $X \in \mathcal{F}'_G\setminus \mathcal{F}_G$ some $2$-factor $\psi(X) \in \mathcal{F}_G$ that is within $k_{JS} = 3$ moves (in the JS chain) away from $X$. All $X$ that map unto the same $2$-factor $F = \psi(X)$ are merged with $F$ into a supervertex that we identify with $F$. If this procedure gives rise to parallel (directed) edges, we replace them by one edge and route all flow over that edge; self-loops are removed. It is not hard to see $|\psi^{-1}(F)|$ has size polynomial in $n$, intuitively, as we only merge vertices that are close to each other (in the original JS chain). Moreover, it is not hard to see that this procedure will only give rise to at most a polynomial number of parallel edges between two given vertices in $\mathcal{F}_G$ (for the same reason). Let us call the resulting (simple) graph $\mathbb{J} = (\mathcal{F}_G,A)$ and the resulting flow in this graph $f^*$. By what is said above, we have $\max_e f^*(e) \leq p'(n)$ for some polynomial $p'$, i.e., the congestion of $f^*$ is at most a polynomial factor larger than that of $\bar{f}$.

The final problem, before we obtain the desired flow $g$, is that the graph $\mathbb{J}$ contains edges (possibly with flow) between $2$-factors $F,F' \in \mathcal{F}_G$ that might be more than a $k$-switch away from each other. Said differently, these edges do not represent transitions in the $k$-switch Markov chain. Let us partition the edge set $A = A_{\text{switch}} \cup A_{\text{infeasible}}$ where $A_{\text{switch}}$ contains all edges of $A$ that represent a transition in the $k$-switch Markov chain, and $ A_{\text{infeasible}}$ all those edges that do not. 

We argue that for every edge $a = (F,F') \in  A_{\text{infeasible}}$, we can always find a short `detour' in the graph $\mathbb{J}$ using only edges in $A_{\text{switch}}$.
To see this, fix some $a \in A_{\text{infeasible}}$. Suppose that $X$ and $Y$ are adjacent in the JS chain and that $F = \psi(X)$ and $F' = \psi(Y)$ (these $X$ and $Y$ exist by existence of the infeasible edge $a$). Since $k_{JS} = 3$, it can be shown that
$$
|F \Delta F'| \leq 12.
$$ 
Intuitively, this follows from the fact that in the JS chain, $F = \psi(X)$ is close to $X$, which is close to $Y$, which is in turn close to $\psi(Y) = F'$. It follows that with at most $12/k$ switches of size at most $k$, that define the shortcut in $\mathbb{J}$ using only edges in $A_{\text{switch}}$,  we can transform $F$ into $F'$. This follows from the assumption of $k$-switch irreducibility.  Since all these detours take place on a 
 `local' level, the congestion of the resulting multi-commodity flow for the $k$-switch Markov chain, that we get from rerouting the flow of infeasible edges over their respective shortcut, increases at most by a polynomial factor on every \emph{fixed} feasible edge in $\mathbb{J}$. That is, for a fixed edge $b = (F_0,F_0') \in A_{\text{switch}}$, the total number of edges $a = (F,F') \in A_{\text{infeasible}}$ that use $b$ in their detour is at most $\text{poly}(n)$, as (roughly speaking) $F_0$ is at most $12/k$ transitions away from $F$ by construction (and $k$ is constant).

This yields the desired flow $g$.
For a precise and detailed outline of this idea, we refer the reader to \cite{AK2019}. 
\end{proof}

\begin{proof}[Proof of Proposition~\ref{pr:2-factor}]  
We claim that given $F_1, F_2, \in \mathcal{F}_G$, there is a $T \in \mathcal{F}_G$ that can be obtained from $F_1$ by a $4$-switch such that $|T \triangle F_2| < |F_1 \triangle F_2|$. Applying this repeatedly proves the proposition, taking $\phi(k) = k$.

Let $F_1, F_2 \in \mathcal{F}_G$. Note that the symmetric difference of $F_1$ and $F_2$ is the vertex-disjoint union of circuits in which edges alternate between $F_1$ and $F_2$ and the circuits visit each vertex zero, one, or two times. If the symmetric difference of $F_1$ and $F_2$ contains such alternating circuits with four or six edges (corresponding to switches of size $2$ or $3$), then switching along such a circuit reduces the symmetric difference, so assume otherwise.
 
In this case it is not hard to see that we can find an $H_1,H_2$-alternating walk
 $P = a_1a_2a_3a_4a_5a_6$ (here the $a_i$ are vertices and $a_1 $ and $a_6$ are distinct)  
 such that $a_1a_2, a_3a_4, a_5a_6$ are edges of $F_1$, and $a_2a_3$, $a_4a_5$ are edges of $F_2$. 
 
 	We try to find vertices $b$ and $c$ that are neighbours on $F_1$ such that $b\in N(a_1)$ and $c \in N(a_6)$. Then the circuit $C:=a_1a_2a_3a_4a_5a_6cba_1$ is a $4$-switch for $F_1$.  We choose $b$ and $c$ as follows. Orient the cycles of $F_1$ arbitrarily. We call the vertex following a vertex $v$ in this orientation $v^+$ and the previous vertex $v^-$. Set $M=\{v^+: v\in N(a_6)\}$ and consider $N(a_1)\cap M$. As both $|N(a_1)|,|M|\geq n/2+7$ we have $|N(a_1)\cap M|\geq 2 \cdot 7 = 14$.\footnote{In the case of bipartite graphs, we note that $a_1a_2a_3a_4a_5a_6$ is a walk in $G$ and so $a_1$ and $a_6$ are in different parts; say $a_1 \in A$ and $a_6 \in B$. Then $N(a_1), M \subseteq B$, so since $|N(a_1)|, |M| \geq \frac{1}{2}n+7$, so $|N(a_1) \cap M| \geq 14$, and we continue as before.}
	 Select $c \in (N(a_1)\cap M)\setminus \{a_i^+,a_i^-, i=1,\dots,6\}$ and set $b=c^-$.
	 For $T$, the $2$-factor produced by switching $F_1$ along $C:=a_1a_2a_3a_4a_5a_6cba_1$, we see that compared to $F_1$, $T$ contains at least two new edges of $F_2$  (namely $a_2a_3, a_4a_5$) but $T$ may have lost one edge of $F_2$ (namely $bc$ if it was in fact an edge of $F_2$), giving a net gain of one. Since $T$ and $F_1$ have the same number of edges, we see that $|T \triangle F_2| \leq |F_1 \triangle F_2| - 1$, as required.
\end{proof}

\end{document}